\newtheorem{theorem}{Theorem}
\newtheorem{proposition}[theorem]{Proposition}
\DeclareMathOperator*{\divergenz}{div}              %
\DeclareMathOperator*{\loc}{loc}         %
\newcommand{\N}{\mathbb{N}}
\newcommand{\R}{\mathbb{R}}
\newcommand{\RN}{\mathbb{R}^N}
\newcommand{\diff}{\mathrm{d}}
\newcommand{\Lp}[1]{L^{#1}(\RN)}
\newcommand{\Wp}[1]{W^{1,#1}(\RN)}
\newcommand{\eps}{\varepsilon}
\newcommand{\ph}{\varphi}
\newcommand{\into}{\int_{\mathbb{R}^N}}
\newcommand{\weak}{\rightharpoonup}
\newcommand{\close}{\overline{\Omega}}
\newcommand{\cprime}{$'$}
\renewcommand{\l}{\left}
\renewcommand{\r}{\right}
\newcommand{\WH}{W^{1, \mathcal{H}}(\RN)}
\numberwithin{theorem}{section}
\numberwithin{equation}{section}
\def\ge{\geqslant}
\def\phi{\varphi}
\def\ykh#1{\left(#1\right)}
\title[Combined effects of singular and superlinear nonlinearities]{Combined effects of singular and superlinear nonlinearities in singular double phase problems in $\mathbb{R}^N$}
\author[W.\,Liu]{Wulong Liu}
\address[W.\,Liu]{School of Science, Jiangxi University of Science and Technology, Ganzhou, Jiangxi 341000, PR China}
\email{liuwul000@gmail.com}
\author[P.\,Winkert]{Patrick Winkert}
\address[P.\,Winkert]{Technische Universit\"{a}t Berlin, Institut f\"{u}r Mathematik, Stra\ss e des 17.\,Juni 136, 10623 Berlin, Germany}
\email{winkert@math.tu-berlin.de}
\subjclass{35J15, 35J62, 35J92, 35P30}
\keywords{Double phase operator, fibering map, multiple solutions, Nehari manifold, singular problems, unbounded domain}
\begin{document}

\begin{abstract}
	This paper is concerned with multiplicity results for parametric singular double phase problems in $\mathbb{R}^N$ via the Nehari manifold approach. It is shown that the problem under consideration has at least two nontrivial weak solutions provided the parameter is sufficiently small. The idea is to split the Nehari manifold into three disjoint parts minimizing the energy functional on two of them. The third set turns out to be the empty set for small values of the parameter.
\end{abstract}

\maketitle

\section{Introduction}

In this paper we consider quasilinear elliptic equations in $\R^N$ driven by the double phase operator and a right-hand side consisting of a singular and a parametric term. Recall that the double phase operator is defined by
\begin{align}\label{operator_double_phase}
	\divergenz(A(u)):=\divergenz \Big(|\nabla u|^{p-2} \nabla u+ \mu(x) |\nabla u|^{q-2} \nabla u\Big)\quad \text{for }u\in \WH,
\end{align}
where $1<p<q$ and with a nonnegative weight function $\mu\in\Lp{1}$ while  $\WH$ is the appropriate Musielak-Orlicz Sobolev function space, see Section \ref{section_2} for more details. Note that the operator in \eqref{operator_double_phase} is closely related to the $p$-Laplacian (for $\mu\equiv 0$) and the $(q,p)$-Laplacian (for $\inf_{\close} \mu>0$).  The physical motivation  of such operator goes back to Zhikov \cite{Zhikov-1986} who introduced the functional
\begin{align}\label{integral_minimizer}
	J(u)=\int_\Omega \Big(|\nabla  u|^p+\mu(x)|\nabla  u|^q\Big)\,\diff x
\end{align}
in order to provide models for strongly anisotropic materials where $\Omega$ is a bounded domain. Other applications can be found in the works of Bahrouni-R\u{a}dulescu-Repov\v{s} \cite{Bahrouni-Radulescu-Repovs-2019} on transonic flows, Benci-D'Avenia-Fortunato-Pisani \cite{Benci-DAvenia-Fortunato-Pisani-2000} on quantum physics and  Cherfils-Il\cprime yasov \cite{Cherfils-Ilyasov-2005} on reaction diffusion systems.

It is easy to see that $J(\cdot)$ defined in \eqref{integral_minimizer} has unbalanced growth and the ellipticity of $J(\cdot)$ changes on the set where the weight function is zero. For further studies of functionals with $(p,q)$-growth we refer to the works of Baroni-Colombo-Mingione \cite{Baroni-Colombo-Mingione-2015,Baroni-Colombo-Mingione-2018}, Byun-Oh \cite{Byun-Oh-2020}, Colombo-Mingione \cite{Colombo-Mingione-2015a,Colombo-Mingione-2015b}, De Filippis-Palatucci \cite{De-Filippis-Palatucci-2019},
Marcellini \cite{Marcellini-1991,Marcellini-1989b}, Ok \cite{Ok-2018,Ok-2020}, Ragusa-Tachikawa \cite{Ragusa-Tachikawa-2020} and the references therein.

In this paper we study the following singular problem
\begin{equation}\label{problem}
	\begin{aligned}
		&-\divergenz(A(u))+u^{p-1}+\mu(x)u^{q-1}= \xi(x)u^{-\alpha}+\lambda u^{\nu-1}\quad \text{in } \R^N,
	\end{aligned}
\end{equation}
with $\lambda>0$ being the parameter to be specified where we assume the following assumptions on the data:
\begin{enumerate}
	\item[\textnormal{(H)}:]
	\begin{enumerate}
		\item[\textnormal{(i)}]
		$1<p<q<N$, $ q<p^*=\frac{Np}{N-p}$ and $0 \leq \mu(\cdot)\in L^\infty(\R^N)$;
		\item[\textnormal{(ii)}]
		$0<\alpha<1$ and $q<\nu<p^*$;
		\item[\textnormal{(iii)}]
		$\xi(x) > 0$ for a.\,a.\,$x\in\R^N$ and $\xi \in \Lp{\sigma}\cap  L^\infty(\R^N)$, where $\sigma>1$ and there exists $\kappa\geq 1$ such that
		\begin{align*}
			p < \kappa \leq p^*\quad\text{and}\quad \frac{1}{\sigma}+\frac{1-\alpha}{\kappa}=1.
		\end{align*}
	\end{enumerate}
\end{enumerate}

We call a function $u \in\WH$ a weak solution of problem \eqref{problem} if $\xi(\cdot)u^{-\alpha} v \in \Lp{1}$, $u> 0$ for a.\,a.\,$x\in\R^N$ and if
\begin{equation}\label{weak_solution}
	\begin{split}
		& \into \Big(|\nabla u|^{p-2} \nabla u+ \mu(x) |\nabla u|^{q-2} \nabla u\Big) \cdot \nabla v \,\diff x+\into \Big(u^{p-1}+\mu(x)u^{q-1}\Big)v \,\diff x \\
		&= \into \xi(x)u^{-\alpha} v \,\diff x+\lambda \into u^{\nu-1}v\,\diff x
	\end{split}
\end{equation}
is satisfied for all $v \in \WH$. It is easy to see that the definition of a weak solution in \eqref{weak_solution} is well-defined. The corresponding energy functional  $\Psi_\lambda\colon\WH\to \R$  for problem \eqref{problem} is given by
\begin{equation*}
	\begin{split}
		\Psi_\lambda(u)&=\frac{1}{p} \|u\|_{1,p} ^p+\frac{1}{q}\Big(\|\nabla u\|_{q,\mu}^q+\|u\|_{q,\mu}^q \Big)-\frac{1}{1-\alpha}\into \xi(x)|u|^{1-\alpha}\,\diff x-\frac{\lambda}{\nu}\|u\|_{\nu}^{\nu}.
	\end{split}
\end{equation*}

Our main result reads as follows.

\begin{theorem}\label{main_result}
	Let hypotheses \textnormal{(H)} be satisfied. Then there exists $\lambda^*>0$ such that for all $\lambda \in (0,\lambda^*]$ problem \eqref{problem} has at least two weak solutions $u^*, v^* \in \WH$ such that $\Psi_\lambda(u^*)<0<\Psi_\lambda(v^*)$.
\end{theorem}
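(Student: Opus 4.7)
The plan is to employ the Nehari manifold method combined with fibering analysis, adapted to the non-smoothness coming from the singular term $\xi(x)u^{-\alpha}$. For each $u\in\WH\setminus\{0\}$ set $\phi_u(t):=\Psi_\lambda(tu)$ for $t>0$. Since $1-\alpha<p<q<\nu$, the map satisfies $\phi_u(0)=0$, $\phi_u(t)\to -\infty$ as $t\to\infty$, and $\phi_u'(t)<0$ for $t$ near $0$ (where $t^{-\alpha}$ dominates) and for $t$ large (where $t^{\nu-1}$ dominates). Rewriting $t^\alpha\phi_u'(t)$ so that the singular contribution appears as the negative constant $-\int_{\R^N}\xi(x)|u|^{1-\alpha}\,\diff x$ and studying the unique positive maximum of the remaining polynomial-type expression, I would derive an explicit threshold $\lambda^*>0$ such that for every $\lambda\in(0,\lambda^*]$ the equation $\phi_u'(t)=0$ has exactly two positive roots $0<t_1(u)<t_2(u)$, giving a local minimum $t_1(u)u\in\mathcal{N}_\lambda^+$ and a local maximum $t_2(u)u\in\mathcal{N}_\lambda^-$, while the degenerate set $\mathcal{N}_\lambda^0$ is empty. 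Here
\[
\mathcal{N}_\lambda:=\{u\in\WH\setminus\{0\}:\phi_u'(1)=0\}=\mathcal{N}_\lambda^+\cup\mathcal{N}_\lambda^-\cup\mathcal{N}_\lambda^0,
\]
split according to the sign of $\phi_u''(1)$.

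Next I would minimize $\Psi_\lambda$ on each nonempty sheet. If $u\in\mathcal{N}_\lambda^+$, then $1$ is the local minimum $t_1(u)$ of $\phi_u$, so $\Psi_\lambda(u)=\phi_u(1)<0$ and hence $m_\lambda^+:=\inf_{\mathcal{N}_\lambda^+}\Psi_\lambda<0$. On $\mathcal{N}_\lambda^-$, the Nehari identity combined with $\nu>q>p$ forces $\|u\|_{\WH}$ to be bounded away from $0$, yielding $m_\lambda^->0$ when $\lambda$ is small. The mass terms $\|u\|_{1,p}^p$ and $\|u\|_{q,\mu}^q$ on the left-hand side of \eqref{problem}, together with the Nehari constraint, provide a priori boundedness of any minimizing sequence in $\WH$.

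The main obstacle, and the point where smallness of $\lambda^*$ is genuinely needed, is compactness in the unbounded domain $\R^N$, since $\WH\hookrightarrow L^\nu(\R^N)$ is only continuous, not compact. The singular functional $u\mapsto\int_{\R^N}\xi(x)|u|^{1-\alpha}\,\diff x$ is sequentially weakly continuous on $\WH$ by virtue of $\xi\in\Lp{\sigma}\cap L^\infty(\R^N)$ together with H\"older's inequality and the exponent relation $\tfrac1\sigma+\tfrac{1-\alpha}\kappa=1$, so its integral behaves well under weak convergence. For $\int_{\R^N}|u|^\nu\,\diff x$ I would use a Brezis--Lieb type decomposition: any loss of mass at infinity decreases $\Psi_\lambda$ by a strictly positive amount, which for $\lambda\le\lambda^*$ would push the weak limit into the wrong sheet of $\mathcal{N}_\lambda$ (or violate the sign of $m_\lambda^\pm$), ruling it out. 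Consequently each minimizing sequence converges strongly along a subsequence, producing minimizers $u^*\in\mathcal{N}_\lambda^+$ and $v^*\in\mathcal{N}_\lambda^-$.

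Finally, to verify that $u^*$ and $v^*$ solve \eqref{weak_solution}, I would use that $\mathcal{N}_\lambda^0=\emptyset$ to obtain via the implicit function theorem $C^1$ projection maps onto $\mathcal{N}_\lambda^\pm$ in a neighborhood of each minimizer. Testing with $u^*+\varepsilon\varphi$ for $\varphi\in\WH$, $\varphi\ge 0$, projecting back to the same sheet and passing $\varepsilon\to 0^+$ yields the variational inequality asserting that the left-hand side of \eqref{weak_solution} dominates the right-hand side; replacing $u$ by $|u|$ in the fibering argument (which leaves $\Psi_\lambda$ invariant) allows one to take $u^*$ and $v^*$ nonnegative, while a standard density/cut-off argument upgrades the inequality to equality for every $v\in\WH$. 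Strict positivity a.e.~follows from the singular right-hand side, and since $\Psi_\lambda(u^*)=m_\lambda^+<0<m_\lambda^-=\Psi_\lambda(v^*)$, the two solutions are distinct.
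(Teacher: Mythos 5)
Your overall architecture coincides with the paper's: the same three-way splitting of $\mathcal{N}_\lambda$ by the sign of the second fibering derivative, the same analysis of $t\mapsto\Psi_\lambda(tu)$ producing two roots $t_1(u)<t_2(u)$ and the emptiness of $\mathcal{N}_\lambda^0$ for small $\lambda$, the signs $m_\lambda^+<0<m_\lambda^-$, coercivity of $\Psi_\lambda$ on $\mathcal{N}_\lambda$, the implicit-function-theorem projection onto the sheets, and the Fatou-based variational inequality upgraded to equality. The one place where you genuinely diverge is the strong convergence of minimizing sequences, and there your argument has a gap. You correctly observe that $\WH\hookrightarrow \Lp{\nu}$ is not compact and that the singular functional is sequentially weakly continuous thanks to $\xi\in\Lp{\sigma}\cap L^\infty(\R^N)$, but the assertion that ``any loss of mass at infinity decreases $\Psi_\lambda$ by a strictly positive amount'' is precisely what needs proof and is not obvious: the escaping profile $w_n=u_n-u^*$ carries no singular energy (that term vanishes by the weak continuity you invoked), contributes nonnegatively through the modular, but also contributes $-\frac{\lambda}{\nu}\|w_n\|_\nu^\nu$ with a negative sign, so the sign of the escaping energy is undetermined unless one first establishes a Nehari-type lower bound for the escaping part itself. ``Pushing the weak limit into the wrong sheet'' is not yet a mechanism; as written this step would require a genuine concentration-compactness analysis that you have not supplied.

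The paper circumvents this entirely with a softer comparison on the fibering map: assuming $\liminf_{n}\varrho(u_n)>\varrho(u^*)$, it deduces $\omega'_{u_n}(t^1_{u^*})>0$ for large $n$, hence $t^1_{u^*}>1$; since $\omega_{u^*}$ is decreasing on $[0,t^1_{u^*}]$ and $t^1_{u^*}u^*\in\mathcal{N}_\lambda^+$, this gives $m_\lambda^+\le\Psi_\lambda(t^1_{u^*}u^*)\le\Psi_\lambda(u^*)<m_\lambda^+$, a contradiction; modular convergence plus uniform convexity of the integrand then upgrades weak to strong convergence in $\WH$. The analogous argument with $t^2_{v^*}$ (the global maximum of the fibering map on $\mathcal{N}_\lambda^-$) handles the second sheet. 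If you wish to keep the Brezis--Lieb route you must actually split every term of $\Psi_\lambda$ and of the Nehari constraint along the escaping profile and track the signs; otherwise replace that single step by the fibering comparison. The remainder of your outline is sound and matches the paper's proof.
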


The idea in the proof of Theorem \ref{main_result} is the usage of the so-called Nehari manifold which is defined by
\begin{align*}
	\mathcal{N}_\lambda=\left\{u\in\WH\setminus\{0\}\,:\,
	\|u\|_{1,p} ^p+\|\nabla u\|_{q,\mu}^q+\|u\|_{q,\mu}^q  =\into \xi(x)|u|^{1-\alpha}\,\diff x+\lambda \|u\|_{\nu}^{\nu}\right\}.
\end{align*}
We point out that $\mathcal{N}_\lambda$ contains all nontrivial weak solutions of problem \eqref{problem} but $\mathcal{N}_\lambda$ is smaller than the whole space $\WH$. We are going to split $\mathcal{N}_\lambda$ into three disjoint parts and the minimizers of two of them provide us the two different weak solutions. The third set turns out to be the empty set.

This method was introduced by Nehari \cite{Nehari-1961, Nehari-1960} and  turned into a very powerful tool in order to find solutions for differential equations, especially when no regularity theory exists as it is the case for our problem. We refer to  Szulkin-Weth \cite{Szulkin-Weth-2010} for a nice introduction to this technique.

To the best of our knowledge this is the first work dealing with a singular double phase problem on the whole of $\R^N$. The motivation of the paper comes from a recent work of Liu-Dai-Papageorgiou-Winkert \cite{Liu-Dai-Papageorgiou-Winkert-2021} who studied a similar problem but on bounded domains $\Omega\subseteq\R^N $ and solutions in the space $W^{1,\mathcal{H}}_0(\Omega)$. In our setting, the situation is different because we do not have simple embeddings of the form $\Lp{s_1} \hookrightarrow \Lp{s_2}$ for $s_2 \leq s_1$ due to the unboundedness of the domain. Instead we need to strengthen the hypotheses on $\xi(\cdot)$, see \textnormal{(H)(iii)}. On the other hand, since we do not use an equivalent norm as it is the case in \cite{Liu-Dai-Papageorgiou-Winkert-2021}, we were able to relax the assumptions on the weight function $\mu(\cdot)$ and also on the exponents $p$ and $q$. Indeed, in \cite{Liu-Dai-Papageorgiou-Winkert-2021} it is supposed that $\mu \in C^{0,1}(\close)$ while we only need $\mu$ to be bounded. Moreover, the condition
\begin{align}\label{condition_equivalent_norm}
	\frac{q}{p}<1+\frac{1}{N}
\end{align}
is required in \cite{Liu-Dai-Papageorgiou-Winkert-2021} which was used for the first time by Baroni-Colombo-Mingione \cite[see (1.8)]{Baroni-Colombo-Mingione-2015} in order to obtain regularity results of local minimizers for double phase integrals. The condition is needed, for instance, for the density of smooth functions in suitable Musielak-Orlicz Sobolev spaces. In our paper it is enough to suppose that $q<p^*$ which is weaker than inequality \eqref{condition_equivalent_norm}.

Existence results for singular double phase problems on bounded domains have only been done in few recent works. Chen-Ge-Wen-Cao \cite{Chen-Ge-Wen-Cao-2020} treated the problem
\begin{equation*}
	\begin{aligned}
		&-\divergenz \Big(|\nabla u|^{p-2} \nabla u+ \mu(x) |\nabla u|^{q-2} \nabla u\Big)= b(x)|u|^{-\vartheta-1}u+\lambda f(x,u)\quad && \text{in } \Omega,
		&u=0&& \text{on }\partial\Omega,
	\end{aligned}
\end{equation*}
where $0<\vartheta<1$ and  $f$ is a Carath\'{e}odory function. It is shown that the problem has at least one weak solution with negative energy. Singular Finsler double phase problems have been recently studied by Farkas-Winkert \cite{Farkas-Winkert-2021} where the existence of a weak solution to the problem
\begin{equation*}
		-\divergenz\left(F^{p-1}(\nabla u)\nabla F(\nabla u) +\mu(x) F^{q-1}(\nabla u)\nabla F(\nabla u)\right) = u^{p^{*}-1}+\lambda \l(u^{\alpha-1}+g(u)\r)\quad\text{in }\Omega,
\end{equation*}
with $u=0$ on $\partial\Omega$ is shown by applying variational tools, while $(\R^N,F)$ stands for a Minkowski space. This work was extended to singular Finsler problems with nonlinear boundary condition by  Farkas-Fiscella-Winkert \cite{Farkas-Fiscella-Winkert-2021}. In all these works, the domain is bounded, only the existence of one weak solution is shown and the methods are different from ours.

Finally, we mention some papers dealing with existence results for a double phase setting (or nonstandard growth) on bounded domains without singular term by applying different tools like critical point theory, Moses theory and variational tools as comparison and truncation techniques. We refer to Bahrouni-R\u{a}dulescu-Winkert \cite{Bahrouni-Radulescu-Winkert-2020}, Barletta-Tornatore \cite{Barletta-Tornatore-2021}, Colasuonno-Squassina \cite{Colasuonno-Squassina-2016}, El Manouni-Marino-Winkert \cite{El-Manouni-Marino-Winkert-2022}, Gasi\'nski-Papa\-georgiou \cite{Gasinski-Papageorgiou-2019}, Gasi\'nski-Winkert \cite{Gasinski-Winkert-2020a,Gasinski-Winkert-2020b,Gasinski-Winkert-2021}, Liu-Dai \cite{Liu-Dai-2018,Liu-Dai-2018b}, Marino-Winkert \cite{Marino-Winkert-2020}, Papageorgiou-R\u{a}dulescu-Repov\v{s} \cite{Papageorgiou-Radulescu-Repovs-2020d}, Papa\-georgiou-Vetro-Vetro \cite{Papageorgiou-Vetro-Vetro-2020}, Perera-Squassina \cite{Perera-Squassina-2019}, R\u{a}dulescu \cite{Radulescu-2019}, Zeng-Bai-Gasi\'nski-Winkert \cite{Zeng-Bai-Gasinski-Winkert-2020, Zeng-Gasinski-Winkert-Bai-2021} and the references there\-in. Only the work of Liu-Dai \cite{Liu-Dai-2020} deals with a double phase operator on the whole of $\R^N$ but without a singular term.

The paper is organized as follows. In Section \ref{section_2} we present the main properties of the Musielak-Orlicz Sobolev spaces defined on $\R^N$ including some embedding results. Section \ref{section_3} is devoted to the proof of Theorem \ref{main_result} which is splited into several propositions.

\section{Preliminaries }\label{section_2}

This section is concerned to the main properties of Musielak-Orlicz Sobolev spaces defined on $\R^N$. The results are mainly taken from Colasuonno-Squassina \cite{Colasuonno-Squassina-2016}, Harjulehto-H\"{a}st\"{o} \cite{Harjulehto-Hasto-2019}, Musielak \cite{Musielak-1983} and Liu-Dai \cite{Liu-Dai-2020}.

For  $1\leq r<\infty$, $\Lp{r}$ and $L^r(\R^N;\R^ N)$ stand for the usual Lebesgue spaces endowed with the norm $\|\cdot\|_r$. Furthermore, we denote by $\Wp{r}$ ($1<r<\infty$)  the usual Sobolev space equipped with the norm
\begin{align*}
	\|u\|_{1,r}=\big(\|\nabla u\|_{r}^{r}+\|u\|_r^ r\big)^\frac{1}{r}.
\end{align*}

Suppose condition \textnormal{(H)(i)} and let $M(\R^N)$ be the space of all measurable functions $u\colon \R^ N\to\R$. Moreover, let $\mathcal{H}\colon \RN \times [0,\infty)\to [0,\infty)$ be the function defined by
\begin{align*}
	\mathcal H(x,t)= t^p+\mu(x)t^q.
\end{align*}
Then, by $L^\mathcal{H}(\RN)$ we denote the Musielak-Orlicz Lebesgue space given by
\begin{align*}
	L^{\mathcal{H}}(\R^N)=\left \{u \in M(\R^N) \,:\,\into \mathcal{H} (x,|u|)\,\diff x < +\infty \right \},
\end{align*}
which is endowed with the Luxemburg norm
\begin{align*}
	\|u\|_{\mathcal{H}} = \inf \left \{ \tau >0 \,:\, \into \mathcal{H} \l(x,\frac{|u|}{\tau}\r)\,\diff x\leq 1  \right \}.
\end{align*}
From Liu-Dai \cite[Theorem 2.7 \textnormal{(i)}]{Liu-Dai-2020} we know that the space $L^\mathcal{H}(\R^N)$ is a reflexive Banach space.

Furthermore, we introduce the seminormed space
\begin{align*}
	L^q_\mu(\R^N)=\left \{u \in M(\R^N) \,:\,\into \mu(x) |u|^q \,\diff x< +\infty \right \}
\end{align*}
with the seminorm
\begin{align*}
	\|u\|_{q,\mu} = \left(\into \mu(x) |u|^q \,\diff x \right)^{\frac{1}{q}}.
\end{align*}
Similarly, we define $L^q_\mu(\R^N;\R^N)$.

The Musielak-Orlicz Sobolev space $W^{1,\mathcal{H}}(\RN)$ is defined by
\begin{align*}
	W^{1,\mathcal{H}}(\RN)= \Big \{u \in L^\mathcal{H}(\RN) \,:\, |\nabla u| \in L^{\mathcal{H}}(\RN) \Big\}
\end{align*}
equipped with the norm
\begin{align*}
	\|u\|= \|\nabla u \|_{\mathcal{H}}+\|u\|_{\mathcal{H}},
\end{align*}
where $\|\nabla u\|_\mathcal{H}=\|\,|\nabla u|\,\|_{\mathcal{H}}$. As before, it is clear that $\WH$ is a reflexive Banach space, see Liu-Dai \cite[Theorem 2.7 \textnormal{(ii)}]{Liu-Dai-2020}.

Moreover, we have the continuous embedding
\begin{align}\label{embedding}
	\WH \hookrightarrow \Lp{r}\quad \text{for all }r \in [p,p^*],
\end{align}
see Liu-Dai \cite[Theorem 2.7(iii)]{Liu-Dai-2020}.

Let
\begin{align}\label{modular}
	\varrho(u)=\into \Big(|\nabla u|^p+\mu(x)|\nabla u|^q+|u|^p+\mu(x)|u|^q\Big)\,\diff x.
\end{align}
The norm $\|\cdot\|$ and the modular function $\varrho$ are related as follows, see Liu-Dai \cite[Proposition 2.1]{Liu-Dai-2018} or Crespo-Blanco-Gasi\'nski-Harjulehto-Winkert \cite[Proposition 2.15]{Crespo-Blanco-Gasinski-Harjulehto-Winkert-2021}.

\begin{proposition}\label{proposition_modular_properties}
	Let \textnormal{(H)(i)} be satisfied, let $y\in \Wp{\mathcal{H}}$ and let $\varrho$ be defined by \eqref{modular}. Then the following hold:
	\begin{enumerate}
		\item[\textnormal{(i)}]
			If $y\neq 0$, then $\|y\|=\lambda$ if and only if $ \varrho(\frac{y}{\lambda})=1$;
		\item[\textnormal{(ii)}]
		$\|y\|<1$ (resp.\,$>1$, $=1$) if and only if $ \varrho(y)<1$ (resp.\,$>1$, $=1$);
		\item[\textnormal{(iii)}]
		If $\|y\|<1$, then $\|y\|^q\leq \varrho(y)\leq\|y\|^p$;
		\item[\textnormal{(iv)}]
		If $\|y\|>1$, then $\|y\|^p\leq \varrho(y)\leq\|y\|^q$;
		\item[\textnormal{(v)}]
		$\|y\|\to 0$ if and only if $ \varrho(y)\to 0$;
		\item[\textnormal{(vi)}]
		$\|y\|\to +\infty$ if and only if $ \varrho(y)\to +\infty$.
	\end{enumerate}
\end{proposition}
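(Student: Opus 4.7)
The plan is to reduce all six items to one homogeneity computation for $\varrho$ combined with the standard Luxemburg-norm mechanism. Since $\mathcal{H}(x,t) = t^p + \mu(x)t^q$ is a sum of two positively homogeneous pieces, for every $\lambda > 0$ the modular splits as
\begin{align*}
\varrho(\lambda y) = \lambda^p \into \big(|\nabla y|^p + |y|^p\big)\,\diff x + \lambda^q \into \mu(x)\big(|\nabla y|^q + |y|^q\big)\,\diff x,
\end{align*}
and both summands are nonnegative. Since $p<q$, this immediately yields the sandwich $\min\{\lambda^p,\lambda^q\}\,\varrho(y) \le \varrho(\lambda y) \le \max\{\lambda^p,\lambda^q\}\,\varrho(y)$, so that for fixed $y\neq 0$ the map $\lambda\mapsto\varrho(\lambda y)$ is continuous and strictly increasing on $(0,\infty)$, with limits $0$ at $0^+$ and $+\infty$ at $+\infty$.

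For (i), I would exploit that the norm $\|\cdot\|$ on $\WH$ is (equivalent to) the Luxemburg-type norm $\|y\| = \inf\{\tau > 0 : \varrho(y/\tau) \le 1\}$ generated by $\varrho$. Applying the intermediate value theorem to the continuous, strictly decreasing map $\tau\mapsto \varrho(y/\tau)$, one obtains a unique $\tau^{\star}$ with $\varrho(y/\tau^{\star})=1$, and by the definition of the infimum this $\tau^{\star}$ equals $\|y\|$. Next, setting $\lambda := \|y\|\neq 0$ and using (i) so that $\varrho(y/\lambda)=1$, I write $y = \lambda\cdot(y/\lambda)$ and apply the scaling sandwich to $y/\lambda$ with factor $\lambda$: this gives $\lambda^q \le \varrho(y) \le \lambda^p$ when $\lambda<1$ and $\lambda^p \le \varrho(y) \le \lambda^q$ when $\lambda>1$. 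These are precisely (iii) and (iv); combining them with the case $\lambda=1$ from (i) yields the full trichotomy (ii), and (v)-(vi) are then immediate by sending $\|y\|\to 0$ or $\|y\|\to+\infty$ in (iii) and (iv).

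The algebra is short; the only delicate point is identifying the norm in the statement with the Luxemburg-type norm generated by $\varrho$, since in the paper $\|y\|$ is introduced as the sum $\|\nabla y\|_{\mathcal{H}} + \|y\|_{\mathcal{H}}$. I would either pass at the outset to the equivalent Luxemburg norm $\inf\{\tau>0 : \varrho(y/\tau)\le 1\}$ used in the cited works of Liu-Dai and Crespo-Blanco-Gasi\'nski-Harjulehto-Winkert, or verify the equivalence explicitly and check that the monotone comparisons with $\varrho$ survive the passage to a comparable norm. Once this identification is made, the proposition is a mechanical consequence of the scaling sandwich, which is the only place where the $(p,q)$-growth structure of $\mathcal{H}$ enters.
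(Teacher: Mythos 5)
Your overall strategy---homogeneity of the two pieces of $\varrho$, the scaling sandwich $\min\{\lambda^p,\lambda^q\}\,\varrho(y)\le\varrho(\lambda y)\le\max\{\lambda^p,\lambda^q\}\,\varrho(y)$, the unit-ball property for (i), and then (ii)--(vi) as corollaries---is exactly the standard argument behind this proposition; the paper itself gives no proof but delegates to Liu--Dai and Crespo-Blanco--Gasi\'nski--Harjulehto--Winkert, whose proofs run along precisely these lines.

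The one genuine problem is your second proposed way of handling the norm: ``verify the equivalence explicitly and check that the monotone comparisons with $\varrho$ survive the passage to a comparable norm.'' They do not survive. Items (i)--(iv) are exact statements (an identity $\varrho(y/\|y\|)=1$, and inequalities with the sharp exponents $p$ and $q$ and no multiplicative constants), and such statements are destroyed by replacing a norm with a merely equivalent one. Indeed, for the sum norm $\|y\|=\|\nabla y\|_{\mathcal H}+\|y\|_{\mathcal H}$ as literally defined in Section \ref{section_2} the proposition is false: take $\mu\equiv 0$ (admissible under (H)(i)), so that $\|y\|=\|\nabla y\|_p+\|y\|_p$ and $\varrho(y)=\|\nabla y\|_p^p+\|y\|_p^p$, and choose $y$ with $\|\nabla y\|_p=\|y\|_p=\tfrac12$; then $\|y\|=1$ while $\varrho(y)=2^{1-p}<1$, contradicting (ii). So your first option is the only viable one: you must read $\|\cdot\|$ throughout as the Luxemburg-type norm $\inf\{\tau>0:\varrho(y/\tau)\le 1\}$ generated by the full modular (which is equivalent to, but not equal to, the sum norm), state this identification explicitly, and note that this is the norm for which the cited references establish the result. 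With that fixed, your argument closes: for $y\ne 0$ the map $\tau\mapsto\varrho(y/\tau)$ is continuous, strictly decreasing (strictness because the $p$-part $\|y\|_{1,p}^p$ is positive) and tends to $0$, so the admissible set is a closed ray $[\tau^\star,\infty)$, the infimum is attained at the unique $\tau^\star$ with $\varrho(y/\tau^\star)=1$, and (ii)--(vi) follow from the sandwich exactly as you describe.
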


\section{Proof of Theorem \ref{main_result}}\label{section_3}

We recall that the corresponding energy functional $\Psi_\lambda\colon\WH\to \R$ of problem \eqref{problem} is defined by
\begin{equation*}
	\begin{split}
		\Psi_\lambda(u)&=\frac{1}{p} \|u\|_{1,p} ^p+\frac{1}{q}\big(\|\nabla u\|_{q,\mu}^q+\|u\|_{q,\mu}^q \big)-\frac{1}{1-\alpha}\into \xi(x)|u|^{1-\alpha}\,\diff x-\frac{\lambda}{\nu}\|u\|_{\nu}^{\nu}.
	\end{split}
\end{equation*}
We know that $\Psi_\lambda$ is not a $C^1$-functional because of the singular term in problem \eqref{problem}. For $u \in \WH\setminus\{0\}$, we introduce the fibering function $\omega_u\colon[0,+\infty)\to \R$ given by 
\begin{align*}
	\omega_u(t)=\Psi_\lambda (tu)\quad\text{for all }t\geq 0.
\end{align*}
Clearly, $\omega_u \in C^\infty((0,\infty))$. The idea in the proof of Theorem \ref{main_result} is to minimize the energy functional $\Psi_\lambda\colon\WH\to \R$ on the so-called Nehari manifold $\mathcal{N}_\lambda$. This manifold is defined as
\begin{align*}
	\mathcal{N}_\lambda
	&=\bigg\{u\in\WH\setminus\{0\}\,:\,\|u\|_{1,p} ^p+\|\nabla u\|_{q,\mu}^q+\|u\|_{q,\mu}^q  =\into \xi(x)|u|^{1-\alpha}\,\diff x+\lambda \|u\|_{\nu}^{\nu}\bigg\}\\
	 & =\l\{u\in\WH\setminus\{0\}\,:\, \omega_u'(1)=0\r\}.
\end{align*}
In order to find our two different weak solutions stated in Theorem \ref{main_result} we have to decompose the set $\mathcal{N}_\lambda$ into three disjoint sets. To this end, let
\begin{align*}
	\mathcal{N}_\lambda^+
	&=\bigg\{u \in \mathcal{N}_\lambda\,:\, (p+\alpha-1)\|u\|_{1,p} ^p+(q+\alpha-1)\big(\|\nabla u\|_{q,\mu}^q+\|u\|_{q,\mu}^q \big)>\lambda (\nu+\alpha-1) \|u\|_{\nu}^{\nu}\bigg\},\\
	& =\l\{u\in\mathcal{N}_\lambda\,:\, \omega_u''(1)>0\r\},\\
	\mathcal{N}_\lambda^0
	&=\bigg\{u \in \mathcal{N}_\lambda\,:\, (p+\alpha-1)\|u\|_{1,p} ^p+(q+\alpha-1)\big(\|\nabla u\|_{q,\mu}^q+\|u\|_{q,\mu}^q \big)=\lambda (\nu+\alpha-1) \|u\|_{\nu}^{\nu}\bigg\},\\
	& =\l\{u\in\mathcal{N}_\lambda\,:\, \omega_u''(1)=0\r\},\\
	\mathcal{N}_\lambda^-
	&=\bigg\{u \in \mathcal{N}_\lambda\,:\, (p+\alpha-1)\|u\|_{1,p} ^p+(q+\alpha-1)\big(\|\nabla u\|_{q,\mu}^q+\|u\|_{q,\mu}^q \big)<\lambda (\nu+\alpha-1) \|u\|_{\nu}^{\nu}\bigg\}\\
	& =\l\{u\in\mathcal{N}_\lambda\,:\, \omega_u''(1)<0\r\}.
\end{align*}

The next proposition will show that the functional $\Psi_\lambda$ restricted to $\mathcal{N}_\lambda$ is coercive.

\begin{proposition}\label{prop_coerivity}
	Let hypotheses \textnormal{(H)} be satisfied. Then $\Psi_\lambda\big|_{\mathcal{N}_\lambda}$ is coercive.
\end{proposition}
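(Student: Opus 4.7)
The plan is to exploit the Nehari constraint to eliminate the superlinear term from $\Psi_\lambda$. For $u\in\mathcal{N}_\lambda$ we have
\begin{align*}
\lambda\|u\|_\nu^\nu = \|u\|_{1,p}^p + \|\nabla u\|_{q,\mu}^q + \|u\|_{q,\mu}^q - \into \xi(x)|u|^{1-\alpha}\,\diff x,
\end{align*}
and substituting this into the expression for $\Psi_\lambda(u)$ yields
\begin{align*}
\Psi_\lambda(u) = \left(\tfrac{1}{p}-\tfrac{1}{\nu}\right)\|u\|_{1,p}^p + \left(\tfrac{1}{q}-\tfrac{1}{\nu}\right)\big(\|\nabla u\|_{q,\mu}^q + \|u\|_{q,\mu}^q\big) - \left(\tfrac{1}{1-\alpha}-\tfrac{1}{\nu}\right)\into \xi(x)|u|^{1-\alpha}\,\diff x.
\end{align*}
Hypothesis \textnormal{(H)(ii)} ensures $p<q<\nu$ and $0<1-\alpha<1<\nu$, so every parenthesized coefficient is strictly positive. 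In particular, the first two terms together are bounded below by $\big(\tfrac{1}{q}-\tfrac{1}{\nu}\big)\varrho(u)$, where $\varrho$ is the modular defined in \eqref{modular}.

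Next I would control the singular integral using hypothesis \textnormal{(H)(iii)}. Applying H\"older's inequality with exponents $\sigma$ and $\sigma/(\sigma-1)=\kappa/(1-\alpha)$ gives
\begin{align*}
\into \xi(x)|u|^{1-\alpha}\,\diff x \leq \|\xi\|_\sigma \,\|u\|_\kappa^{1-\alpha}.
\end{align*}
Since $p\leq \kappa \leq p^*$, the continuous embedding \eqref{embedding} yields $\|u\|_\kappa \leq C\|u\|$ for some constant $C>0$. Combining this bound with Proposition \ref{proposition_modular_properties}\textnormal{(iv)}, for $\|u\|>1$ one obtains
\begin{align*}
\Psi_\lambda(u) \geq \left(\tfrac{1}{q}-\tfrac{1}{\nu}\right)\|u\|^p - C_1 \|u\|^{1-\alpha},
\end{align*}
where $C_1>0$ depends on $\|\xi\|_\sigma$, $C$, $\alpha$ and $\nu$ but not on $u$. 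Because $p>1>1-\alpha$, the right-hand side tends to $+\infty$ as $\|u\|\to \infty$, which gives the coercivity.

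The key structural observation is that the Nehari identity trades the superlinear term for a combination in which the modular contribution has a favorable sign and the only negative piece is the singular one, whose growth rate $\|u\|^{1-\alpha}$ is strictly slower than $\|u\|^p$. The most delicate point is handling the singular integral on the unbounded domain $\mathbb{R}^N$, where the elementary Lebesgue embedding $L^{s_1}\hookrightarrow L^{s_2}$ is unavailable; this is precisely what the assumption \textnormal{(H)(iii)} with its calibrated exponents $\sigma$ and $\kappa$ is designed to circumvent.
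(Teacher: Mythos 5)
Your proof is correct and follows essentially the same route as the paper: substitute the Nehari identity to eliminate the $\lambda\|u\|_\nu^\nu$ term, bound the remaining positive part below by $\bigl(\tfrac{1}{q}-\tfrac{1}{\nu}\bigr)\varrho(u)$, control the singular integral via H\"older's inequality with the exponents from (H)(iii) and the embedding \eqref{embedding}, and conclude from Proposition \ref{proposition_modular_properties}(iv) together with $1-\alpha<1<p$. No gaps.
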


\begin{proof}
	Taking $u \in \mathcal{N}_\lambda$ with $\|u\|>1$, we obtain from the definition of $\mathcal{N}_\lambda$ that
	\begin{align}\label{prop_1}
		-\frac{\lambda}{\nu}\|u\|_{\nu}^{\nu}=-\frac{1}{\nu}  \big(\|u\|_{1,p} ^p+\|\nabla u\|_{q,\mu}^q+\|u\|_{q,\mu}^q\big)+\frac{1}{\nu}\into \xi(x)|u|^{1-\alpha}\,\diff x.
	\end{align}
	Combining \eqref{prop_1} with $\Psi_\lambda$, using Proposition \ref{proposition_modular_properties} \textnormal{(iv)} along with H\"older's inequality, see hypothesis \textnormal{(H)(iii)},  and applying the embedding \eqref{embedding} leads to
	\begin{align*}
		\Psi_\lambda(u)&=\l[\frac{1}{p}-\frac{1}{\nu} \r]\|u\|_{1,p} ^p+\l[\frac{1}{q}-\frac{1}{\nu} \r]\big(\|\nabla u\|_{q,\mu}^q+\|u\|_{q,\mu}^q \big)
		+\l[\frac{1}{\nu}-\frac{1}{1-\alpha} \r]\into \xi(x)|u|^{1-\alpha}\,\diff x\\
		& \geq \l[\frac{1}{q}-\frac{1}{\nu} \r]\varrho(u) +\l[\frac{1}{\nu}-\frac{1}{1-\alpha} \r]\|\xi\|_{\sigma}\|u\|_{\kappa}^{1-\alpha}\\
		& \geq c_1 \|u\|^p-c_2\|u\|^{1-\alpha}
	\end{align*}
	for some constants $c_1,c_2>0$ since $p<q<\nu$ by hypotheses \textnormal{(H)(i), (ii)}. Therefore, because of $1-\alpha<1<p$, the  assertion is proved.
\end{proof}

We define $m_\lambda^+=\inf_{\mathcal{N}_\lambda^+}\Psi_\lambda$.

\begin{proposition}\label{prop_negative_energy}
	Let hypotheses \textnormal{(H)} be satisfied and suppose that $\mathcal{N}_\lambda^+\neq \emptyset$. Then $m_\lambda^+<0$.
\end{proposition}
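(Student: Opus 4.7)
The plan is to show that $\Psi_\lambda$ is strictly negative on every element of $\mathcal{N}_\lambda^+$; then, since $\mathcal{N}_\lambda^+\neq\emptyset$ by assumption, choosing any $u\in\mathcal{N}_\lambda^+$ immediately yields $m_\lambda^+\leq\Psi_\lambda(u)<0$. The argument I have in mind is purely algebraic and combines the Nehari identity (the equation defining $\mathcal{N}_\lambda$) with the strict inequality defining $\mathcal{N}_\lambda^+$; no limits, compactness, or even the coercivity from Proposition \ref{prop_coerivity} come into play.

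First, I would fix $u\in\mathcal{N}_\lambda^+$ and rewrite the singular integral via the Nehari identity as
\begin{align*}
\into\xi(x)|u|^{1-\alpha}\,\diff x=\|u\|_{1,p}^p+\|\nabla u\|_{q,\mu}^q+\|u\|_{q,\mu}^q-\lambda\|u\|_\nu^\nu.
\end{align*}
Substituting into $\Psi_\lambda(u)$, multiplying through by $1-\alpha>0$, and collecting coefficients, I expect to obtain the decisive representation
\begin{align*}
(1-\alpha)\Psi_\lambda(u)=-\frac{p+\alpha-1}{p}\|u\|_{1,p}^p-\frac{q+\alpha-1}{q}\big(\|\nabla u\|_{q,\mu}^q+\|u\|_{q,\mu}^q\big)+\lambda\frac{\nu+\alpha-1}{\nu}\|u\|_\nu^\nu,
\end{align*}
in which each of the three coefficients is strictly positive because $1-\alpha<1<p<q<\nu$ by \textnormal{(H)(i)}, \textnormal{(H)(ii)}.

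Second, I would use the $\mathcal{N}_\lambda^+$ inequality to dominate the single positive contribution on the right. Dividing the defining inequality of $\mathcal{N}_\lambda^+$ by $\nu$ gives
\begin{align*}
\lambda\frac{\nu+\alpha-1}{\nu}\|u\|_\nu^\nu<\frac{p+\alpha-1}{\nu}\|u\|_{1,p}^p+\frac{q+\alpha-1}{\nu}\big(\|\nabla u\|_{q,\mu}^q+\|u\|_{q,\mu}^q\big),
\end{align*}
and since $p,q<\nu$, replacing the factors $\tfrac{1}{\nu}$ on the right by the larger $\tfrac{1}{p}$ and $\tfrac{1}{q}$ only enlarges the bound. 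Plugging the resulting estimate into the identity from the previous paragraph immediately produces $(1-\alpha)\Psi_\lambda(u)<0$, hence $\Psi_\lambda(u)<0$. The only detail calling for attention is that the strict inequality must survive the replacement $\tfrac{1}{\nu}\mapsto\tfrac{1}{p}$; this is automatic because $u\neq 0$ in $\WH$ together with the continuous embedding \eqref{embedding} forces $\|u\|_{1,p}^p>0$, so the first summand on the right is strictly enlarged. Beyond this there is no real obstacle, and the proof reduces essentially to coefficient bookkeeping.
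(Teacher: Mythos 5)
Your proposal is correct and follows essentially the same route as the paper: eliminate the singular integral via the Nehari identity, then use the strict inequality defining $\mathcal{N}_\lambda^+$ together with $p<q<\nu$ to absorb the remaining positive $\nu$-term; your normalization by $1-\alpha$ is only a cosmetic difference. Note that the final strictness already follows from the strict inequality in the definition of $\mathcal{N}_\lambda^+$, so the extra remark about $\|u\|_{1,p}^p>0$ is not needed (though it is true).
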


\begin{proof}
	Choosing $u \in \mathcal{N}_\lambda^+\neq \emptyset$ and noticing that $\mathcal{N}_\lambda^+\subseteq \mathcal{N}_\lambda$ gives
	\begin{align}\label{prop_3}
		-\frac{1}{1-\alpha}\into \xi(x)|u|^{1-\alpha}\,\diff x =-\frac{1}{1-\alpha} \big(\|u\|_{1,p}^p+\|\nabla u\|_{q,\mu}^q+\|u\|_{q,\mu}^q\big)+\frac{\lambda}{1-\alpha}\|u\|_{\nu}^{\nu}.
	\end{align}
	Since $u \in \mathcal{N}_\lambda^+$, we have
	\begin{align}\label{prop_2}
		\lambda \|u\|_{\nu}^{\nu}<\frac{p+\alpha-1}{\nu+\alpha-1}\|u\|_{1,p}^p+\frac{q+\alpha-1}{\nu+\alpha-1}\big(\|\nabla u\|_{q,\mu}^q+\|u\|_{q,\mu}^q\big).
	\end{align}
	Combining \eqref{prop_3} and \eqref{prop_2} with the definition of $\Psi_\lambda$ leads to
	\begin{align*}
		\begin{split}
			\Psi_{\lambda}(u)&=\frac{1}{p} \|u\|_{1,p}^p+\frac{1}{q}\big(\|\nabla u\|_{q,\mu}^q+\|u\|_{q,\mu}^q\big)-\frac{1}{1-\alpha}\into \xi(x)|u|^{1-\alpha}\,\diff x-\frac{\lambda}{\nu}\|u\|_{\nu}^{\nu}\\
			& =\l[\frac{1}{p} -\frac{1}{1-\alpha}\r]\|u\|_{1,p}^p+\l[\frac{1}{q} -\frac{1}{1-\alpha}\r]\big(\|\nabla u\|_{q,\mu}^q+\|u\|_{q,\mu}^q\big)+\lambda \l[\frac{1}{1-\alpha}-\frac{1}{\nu}\r]\|u\|_{\nu}^{\nu}\\
			&< \l[\frac{-(p+\alpha-1)}{p(1-\alpha)}+\frac{p+\alpha-1}{\nu+\alpha-1}\cdot \frac{\nu+\alpha-1}{\nu(1-\alpha)}\r]\|u\|_{1,p}^p\\
			&\quad +\l[\frac{-(q+\alpha-1)}{q(1-\alpha)}+\frac{q+\alpha-1}{\nu+\alpha-1}\cdot \frac{\nu+\alpha-1}{\nu(1-\alpha)}\r]\big(\|\nabla u\|_{q,\mu}^q+\|u\|_{q,\mu}^q\big)\\
			& = \frac{p+\alpha-1}{1-\alpha}\l[\frac{1}{\nu}-\frac{1}{p}\r]\|u\|_{1,p}^p+\frac{q+\alpha-1}{1-\alpha}\l[\frac{1}{\nu}-\frac{1}{q}\r]\big(\|\nabla u\|_{q,\mu}^q+\|u\|_{q,\mu}^q\big)\\
			&<0,
		\end{split}
	\end{align*}
	due to $p<q<\nu$. Therefore, $\Psi_\lambda \big|_{\mathcal{N}_\lambda^+}<0$ which implies $m_\lambda^+<0$.
\end{proof}

\begin{proposition}\label{prop_emptiness}
	Let hypotheses \textnormal{(H)} be satisfied. Then there exists $\overline{\lambda}>0$ such that $\mathcal{N}^0_\lambda=\emptyset$ for all $(0,\overline{\lambda})$.
\end{proposition}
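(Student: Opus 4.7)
The plan is to argue by contradiction. Suppose some $u\in\mathcal{N}^0_\lambda$ exists; the goal is to produce a $\lambda$-independent upper bound on $\|u\|$ together with a lower bound on $\|u\|$ that blows up as $\lambda\to 0^+$, and then to choose $\overline{\lambda}>0$ small enough that the two are incompatible.

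Writing $\varrho(u)=\|u\|_{1,p}^p+\|\nabla u\|_{q,\mu}^q+\|u\|_{q,\mu}^q$, an element $u\in\mathcal{N}^0_\lambda$ satisfies the two identities
\begin{align*}
\varrho(u)&=\into \xi(x)|u|^{1-\alpha}\,\diff x+\lambda\|u\|_\nu^\nu,\\
\lambda(\nu+\alpha-1)\|u\|_\nu^\nu&=(p+\alpha-1)\|u\|_{1,p}^p+(q+\alpha-1)\bigl(\|\nabla u\|_{q,\mu}^q+\|u\|_{q,\mu}^q\bigr).
\end{align*}
Since $1<p<q<\nu$, the second identity yields the sandwich
$\tfrac{p+\alpha-1}{\nu+\alpha-1}\varrho(u)\le\lambda\|u\|_\nu^\nu\le\tfrac{q+\alpha-1}{\nu+\alpha-1}\varrho(u)$.
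Plugging the upper estimate into the first identity and applying H\"older's inequality with the exponents $\sigma,\kappa$ from \textnormal{(H)(iii)} together with the embedding \eqref{embedding} gives
\[
\tfrac{\nu-q}{\nu+\alpha-1}\varrho(u)\;\le\;\into\xi(x)|u|^{1-\alpha}\,\diff x\;\le\;c\|\xi\|_\sigma\|u\|^{1-\alpha}.
\]
Since $p-1+\alpha>0$, Proposition \ref{proposition_modular_properties}\textnormal{(iii)}-\textnormal{(iv)} then produces a $\lambda$-independent upper bound $\|u\|\le M$.

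For the lower bound, the left half of the sandwich combined with the continuous embedding $\WH\hookrightarrow L^\nu$ from \eqref{embedding} (valid because $p<\nu<p^*$) gives $\tfrac{p+\alpha-1}{\nu+\alpha-1}\varrho(u)\le\lambda c_\nu^\nu\|u\|^\nu$. If $\|u\|\le 1$ then Proposition \ref{proposition_modular_properties}\textnormal{(iii)} forces $\|u\|^{\nu-q}\ge c'/\lambda$ for some $\lambda$-independent $c'>0$, contradicting $\|u\|\le 1$ once $\lambda$ is small enough; hence $\|u\|>1$ for such $\lambda$, and then Proposition \ref{proposition_modular_properties}\textnormal{(iv)} yields $\|u\|^{\nu-p}\ge c'/\lambda$, i.e.\ a lower bound that blows up as $\lambda\to 0^+$. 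Choosing $\overline{\lambda}>0$ so small that $(c'/\overline{\lambda})^{1/(\nu-p)}>M$ then makes the upper and lower bounds incompatible for every $\lambda\in(0,\overline{\lambda})$, forcing $\mathcal{N}^0_\lambda=\emptyset$. The only delicate point is the separate handling of the two modular regimes $\|u\|\lessgtr 1$ in Proposition \ref{proposition_modular_properties}; apart from that the argument is a routine comparison of powers of $\lambda$.
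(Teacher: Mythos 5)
Your proof is correct and follows essentially the same route as the paper: a $\lambda$-independent upper bound on $\|u\|$ extracted from the two Nehari identities via H\"older's inequality and the embedding \eqref{embedding}, against a lower bound from the $\mathcal{N}^0_\lambda$-condition that blows up as $\lambda\to 0^+$. The only cosmetic difference is that the paper eliminates the $\lambda\|u\|_\nu^\nu$ term by subtracting the two identities rather than via your sandwich estimate, and your explicit case distinction between the regimes $\|u\|\le 1$ and $\|u\|>1$ is if anything slightly more careful than the paper's treatment.
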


\begin{proof}
	We are going to prove the assertion via contradiction. To this end, let us assume that for every $\overline{\lambda}>0$ there exists $\lambda \in (0,\overline{\lambda})$ such that $\mathcal{N}^0_\lambda \neq \emptyset$. Thus, for any given $\lambda>0$ there exists an element $u\in \mathcal{N}_\lambda^0$ such that
	\begin{align}\label{prop_4}
		(p+\alpha-1)\|u\|_{1,p}^p+(q+\alpha-1)\big(\|\nabla u\|_{q,\mu}^q+\|u\|_{q,\mu}^q\big)=\lambda (\nu+\alpha-1) \|u\|_{\nu}^{\nu}.
	\end{align}
	Because of $u \in \mathcal{N}_\lambda$ we have
	\begin{align}\label{prop_5}
		\begin{split}
		&(\nu+\alpha-1) \|u\|_{1,p}^p+(\nu+\alpha-1) \big(\|\nabla u\|_{q,\mu}^q+\|u\|_{q,\mu}^q\big)\\
		& = (\nu+\alpha-1)\into \xi(x)|u|^{1-\alpha}\,\diff x+\lambda (\nu+\alpha-1)\|u\|_{\nu}^{\nu}.
		\end{split}
	\end{align}
	Subtracting \eqref{prop_4} from \eqref{prop_5} results in
	\begin{align}\label{prop_6}
		\begin{split}
			&(\nu-p) \|u\|_{1,p}^p+(\nu-q) \big(\|\nabla u\|_{q,\mu}^q+\|u\|_{q,\mu}^q\big)= (\nu+\alpha-1)\into \xi(x)|u|^{1-\alpha}\,\diff x.
		\end{split}
	\end{align}
	From \eqref{prop_6} we obtain by using Proposition \ref{proposition_modular_properties}\textnormal{(iii)}, \textnormal{(iv)}, hypothesis \textnormal{H(iii)} and \eqref{embedding} that
	\begin{align*}
		\min\l\{\|u\|^p,\|u\|^q\r\} \leq c_3 \|u\|^{1-\alpha}
	\end{align*}
	for some $c_3>0$ since $1-\alpha<1<p<q<\nu$. Therefore,
	\begin{align}\label{prop_7}
		\|u\| \leq c_4
	\end{align}
	for some $c_4>0$.

	On the other hand, from \eqref{prop_4}, taking Proposition \ref{proposition_modular_properties}\textnormal{(iii)}, \textnormal{(iv)} and \eqref{embedding} into account, we obtain
	\begin{align*}
		\min\l\{\|u\|^p,\|u\|^q\r\} \leq \lambda c_5 \|u\|^{\nu}
	\end{align*}
	for some $c_5>0$. This leads to
	\begin{align*}
		\|u\| \geq \l(\frac{1}{\lambda c_5}\r)^{\frac{1}{\nu-p}}
		\quad\text{or}\quad
		\|u\| \geq \l(\frac{1}{\lambda c_5}\r)^{\frac{1}{\nu-q}}.
	\end{align*}
	Due to $p<q<\nu$, we see that if $\lambda \to 0^+$, then $\|u\|\to +\infty$ contradicting \eqref{prop_7}.
\end{proof}

\begin{proposition}\label{prop_nonemptiness_and_existence}
	Let hypotheses \textnormal{(H)} be satisfied. Then there exists $\hat{\lambda}\in (0,\overline{\lambda}]$ such that $\mathcal{N}^{\pm}_\lambda\neq \emptyset$ for all $\lambda\in(0,\hat{\lambda})$. Furthermore, for any $\lambda \in (0,\hat{\lambda})$, there exists $u^*\in\mathcal{N}_\lambda^+$ such that $\Psi_\lambda(u^*)=m_\lambda^+<0$ and $u^*(x) \geq 0$ for a.\,a.\,$x\in\R^N$.
\end{proposition}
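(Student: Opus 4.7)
The plan is to parametrize $\mathcal{N}_\lambda^\pm$ through the fibering map $\omega_u(t)=\Psi_\lambda(tu)$ and then minimize $\Psi_\lambda$ on $\mathcal{N}_\lambda^+$ by the direct method. For $u\in\WH\setminus\{0\}$, multiplying $\omega_u'(t)=0$ by $t^\alpha$ yields the equivalent condition $h_u(t)=\into\xi(x)|u|^{1-\alpha}\,\diff x$, where
\[
h_u(t)=t^{p+\alpha-1}\|u\|_{1,p}^p+t^{q+\alpha-1}\big(\|\nabla u\|_{q,\mu}^q+\|u\|_{q,\mu}^q\big)-\lambda t^{\nu+\alpha-1}\|u\|_\nu^\nu.
\]
Exploiting $p+\alpha-1<q+\alpha-1<\nu+\alpha-1$, I would show that $h_u'$ has a unique positive zero $t^*(u)$ by factoring $h_u'(t)=t^{p+\alpha-2}B_u(t)$ and observing that $B_u$ is strictly increasing then strictly decreasing (its own derivative factors as $t^{q-p-1}$ times a strictly decreasing function). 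Hence $h_u$ rises from $h_u(0)=0$ to its unique maximum at $t^*(u)$ and then decreases to $-\infty$, and this maximum blows up as $\lambda\to 0^+$. Fixing once and for all some $u_0\in\WH\setminus\{0\}$, there is $\hat\lambda\in(0,\overline\lambda]$ such that, for every $\lambda\in(0,\hat\lambda)$, the equation $h_{u_0}(t)=\into\xi|u_0|^{1-\alpha}\,\diff x$ has exactly two positive roots $t_1(u_0)<t_2(u_0)$. The identity $t^{\alpha+1}\omega_u''(t)=t\,h_u'(t)$ at critical points then places $t_1(u_0)u_0\in\mathcal{N}_\lambda^+$ and $t_2(u_0)u_0\in\mathcal{N}_\lambda^-$, showing $\mathcal{N}_\lambda^\pm\neq\emptyset$.

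To produce the minimizer I would take a minimizing sequence $\{u_n\}\subset\mathcal{N}_\lambda^+$ with $\Psi_\lambda(u_n)\to m_\lambda^+<0$ (Proposition \ref{prop_negative_energy}). Because $|\nabla|u||=|\nabla u|$ a.e., replacing $u_n$ by $|u_n|$ preserves both membership in $\mathcal{N}_\lambda^+$ and the energy, so I may assume $u_n\ge 0$. By Proposition \ref{prop_coerivity}, $\{u_n\}$ is bounded in $\WH$; extracting subsequences I obtain $u_n\weak u^*$ in $\WH$, $u_n\to u^*$ a.e., and $u_n\to u^*$ in $L^r_{\mathrm{loc}}(\R^N)$ for all $r\in[1,p^*)$, with $u^*\ge 0$.

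The main obstacle is passing to the limit in the nonlocal quantities $\into\xi|u_n|^{1-\alpha}\,\diff x$ and $\|u_n\|_\nu^\nu$ on the unbounded domain $\R^N$, where the Sobolev embedding is only continuous. For the singular term the summability hypothesis \textnormal{(H)(iii)} is tailor-made: splitting $\R^N=B_R\cup(\R^N\setminus B_R)$, local strong $L^\kappa$-convergence handles the interior while H\"older gives a tail estimate $\int_{\R^N\setminus B_R}\xi|u_n|^{1-\alpha}\,\diff x\le\|\xi\|_{L^\sigma(\R^N\setminus B_R)}\|u_n\|_\kappa^{1-\alpha}\to 0$ as $R\to\infty$, uniformly in $n$; sending $n\to\infty$ and then $R\to\infty$ yields $\into\xi|u_n|^{1-\alpha}\,\diff x\to\into\xi|u^*|^{1-\alpha}\,\diff x$. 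The $L^\nu$-term requires a more delicate interpolation/Vitali argument exploiting $q<\nu<p^*$. Once these limits are in hand, if $u^*\equiv 0$ then the Nehari identity together with weak lower semicontinuity of $\varrho$ (Proposition \ref{proposition_modular_properties}) would force $\Psi_\lambda(u_n)\to 0$, contradicting $m_\lambda^+<0$; hence $u^*\ne 0$. Applying the fibering analysis of the first paragraph to $u^*$ produces a unique $\tilde t>0$ with $\tilde t u^*\in\mathcal{N}_\lambda^+$, and, using that $t=1$ minimizes $\omega_{u_n}$ on the left branch containing $\tilde t$,
\[
\Psi_\lambda(\tilde t u^*)\le\liminf_{n\to\infty}\Psi_\lambda(\tilde t u_n)\le\liminf_{n\to\infty}\Psi_\lambda(u_n)=m_\lambda^+.
\]
Equality throughout forces $\tilde t=1$, $u^*\in\mathcal{N}_\lambda^+$ and $\Psi_\lambda(u^*)=m_\lambda^+$, as desired.
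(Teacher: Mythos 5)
Your overall strategy (fibering analysis along rays, then the direct method on $\mathcal{N}_\lambda^+$) is the same as the paper's, and your description of the shape of $h_u$ and your argument ruling out $u^*=0$ are sound. But there is one essential gap: you choose $\hat\lambda$ so that the two-root property holds only for a single fixed element $u_0$. That does give $\mathcal{N}_\lambda^\pm\neq\emptyset$, but it is not enough for the second half of your argument, where you must ``apply the fibering analysis of the first paragraph to $u^*$'' --- and $u^*$ is the unknown weak limit of the minimizing sequence, not $u_0$. For a general $u$ the equation $h_u(t)=\into\xi(x)|u|^{1-\alpha}\,\diff x$ has two roots only if $\max_{t>0}h_u(t)$ exceeds the right-hand side, and this imposes a smallness condition on $\lambda$ that a priori depends on $u$. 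The whole point of the paper's computation with the auxiliary function $\hat\psi_u(t)=t^{p-\nu}\|u\|_{1,p}^p-t^{-\nu-\alpha+1}\into\xi(x)|u|^{1-\alpha}\,\diff x$ is to remove this dependence: maximizing $\hat\psi_u$ explicitly and using $\into\xi(x)|u|^{1-\alpha}\,\diff x\le c_6\|u\|_{1,p}^{1-\alpha}$ and $\|u\|_\nu^\nu\le c_7\|u\|_{1,p}^{\nu}$, one gets $\hat\psi_u(\hat t_0)-\lambda\|u\|_\nu^\nu\ge (c_8-\lambda c_7)\|u\|_{1,p}^{\nu}$ with constants independent of $u$ (both sides scale like $\|u\|_{1,p}^{\nu}$), and then $\psi_u\ge\hat\psi_u$ transfers this to the full fibering function. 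You need this uniform threshold; without it the existence of $\tilde t$ with $\tilde t u^*\in\mathcal{N}_\lambda^+$ is unjustified.

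Two further points in the limit passage. First, you correctly flag that $\|u_n\|_\nu^\nu\to\|u^*\|_\nu^\nu$ is the delicate step on $\R^N$, but an ``interpolation/Vitali argument'' will not deliver it: the embedding $\WH\hookrightarrow\Lp{\nu}$ is continuous but not compact, and a bounded sequence can lose $L^\nu$-mass at infinity (translating bumps). Whatever is done here must use that $\{u_n\}$ is a minimizing sequence; the paper sidesteps the issue by a contradiction argument on $\liminf_{n}\varrho(u_n)$ rather than by proving convergence of $\|u_n\|_\nu^\nu$ directly, and your proposal leaves the step open. Second, your inequality $\Psi_\lambda(\tilde t u_n)\le\Psi_\lambda(u_n)$ relies on $t=1$ minimizing $\omega_{u_n}$ on the branch containing $\tilde t$, which is only true for $\tilde t\le t^2_{u_n}$; since $\tilde t$ is determined by $u^*$ and $t^2_{u_n}$ by $u_n$, this containment is not automatic. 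The paper instead shows that $\liminf_n\varrho(u_n)>\varrho(u^*)$ would force $t^1_{u^*}>1$ and derives a contradiction from the monotonicity of $\omega_{u^*}$ on $[0,t^1_{u^*}]$, which avoids comparing $\tilde t$ with the critical points of $\omega_{u_n}$.
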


\begin{proof}
	Let $u\in \WH\setminus \{0\}$. We introduce the function $\hat{\psi}_u\colon (0,+\infty) \to \R$ given by
	\begin{align*}
		\hat{\psi}_u(t)= t^{p-\nu}\|u\|_{1,p}^p-t^{-\nu-\alpha+1}\into \xi(x)|u|^{1-\alpha} \,\diff x.
	\end{align*}
	First note, since $\nu-p<\nu+\alpha-1$, there exists $\hat{t}_0>0$ such that
	\begin{align*}
		\hat{\psi}_u\l(\hat{t}_0\r)=\max_{t>0} \hat{\psi}_u(t).
	\end{align*}
	Thus, $\hat{\psi}'_u(\hat{t}_0)=0$, or equivalently
	\begin{align*}
		(p-\nu)\hat{t}_0^{p-\nu-1}\|u\|_{1,p}^p+(\nu+\alpha-1)\hat{t}_0^{-\nu-\alpha}\into \xi(x)|u|^{1-\alpha} \,\diff x=0,
	\end{align*}
	which implies
	\begin{align*}
		\hat{t}_0=\l[\frac{(\nu+\alpha-1)\into \xi(x)|u|^{1-\alpha} \,\diff x}{(\nu-p)\|u\|_{1,p}^p}\r]^{\frac{1}{p+\alpha-1}}.
	\end{align*}
	Furthermore, we obtain
	\begin{align}\label{prop_40}
		\begin{split}
			\hat{\psi}_u\l(\hat{t}_0\r)
			&=\frac{\Big[(\nu-p) \|u\|_{1,p}^p \Big]^{\frac{\nu-p}{p+\alpha-1}}}{\Big[(\nu+\alpha-1)\into \xi(x)|u|^{1-\alpha}\,\diff x \Big]^{\frac{\nu-p}{p+\alpha-1}}}\|u\|_{1,p}^p\\
			&\quad -\frac{\Big[(\nu-p) \|u\|_{1,p}^p \Big]^{\frac{\nu+\alpha-1}{p+\alpha-1}}}{\Big[(\nu+\alpha-1)\into \xi(x)|u|^{1-\alpha}\,\diff x \Big]^{\frac{\nu+\alpha-1}{p+\alpha-1}}}\into \xi(x)|u|^{1-\alpha}\,\diff x\\
			&=\frac{(\nu-p)^{\frac{\nu-p}{p+\alpha-1}} {\l(\|u\|_{1,p}^p\r) }^{\frac{\nu+\alpha-1}{p+\alpha-1}}}{(\nu+\alpha-1)^{\frac{\nu-p}{p+\alpha-1}}\Big[\into \xi(x)|u|^{1-\alpha}\,\diff x \Big]^{\frac{\nu-p}{p+\alpha-1}}}\\
			&\quad -\frac{(\nu-p)^{\frac{\nu+\alpha-1}{p+\alpha-1}} {\l(\|u\|_{1,p}^p\r) }^{\frac{\nu+\alpha-1}{p+\alpha-1}}}{(\nu+\alpha-1)^{\frac{\nu+\alpha-1}{p+\alpha-1}}\Big[\into \xi(x)|u|^{1-\alpha}\,\diff x \Big]^{\frac{\nu-p}{p+\alpha-1}}}\\
			&=\frac{p+\alpha-1}{\nu-p} \l[\frac{\nu-p}{\nu+\alpha-1}\r]^{\frac{\nu+\alpha-1}{p+\alpha-1}}\frac{{\l(\|u\|_{1,p}^p\r)}^{\frac{\nu+\alpha-1}{p+\alpha-1}}}{\Big[\into \xi(x)|u|^{1-\alpha}\,\diff x \Big]^{\frac{\nu-p}{p+\alpha-1}}}.
		\end{split}	
	\end{align}
	Moreover, from hypothesis \textnormal{H(iii)}, H\"older's inequality and the continuous embedding $\Wp{p}$ $\hookrightarrow$ $\Lp{\kappa}$, we get
	\begin{align}\label{prop_42}
		\into \xi(x)|u|^{1-\alpha}\,\diff x \leq c_6 \|u\|_{1,p}^{1-\alpha}
	\end{align}
	for some $c_6>0$. Combining \eqref{prop_40} and \eqref{prop_42} along with the continuous embedding $\Wp{p}\hookrightarrow \Lp{\nu}$ leads to
	\begin{align*}
		&\hat{\psi}_u\l(\hat{t}_0\r)-\lambda \|u\|_{\nu}^{\nu}\\
		&=\frac{p+\alpha-1}{\nu-p} \l[\frac{\nu-p}{\nu+\alpha-1}\r]^{\frac{\nu+\alpha-1}{p+\alpha-1}}\frac{{\l(\|u\|_{1,p}^p\r) }^{\frac{\nu+\alpha-1}{p+\alpha-1}}}{\Big[\into \xi(x)|u|^{1-\alpha}\,\diff x \Big]^{\frac{\nu-p}{p+\alpha-1}}}-\lambda \|u\|_{\nu}^{\nu}\\
		&\geq \frac{p+\alpha-1}{\nu-p} \l[\frac{\nu-p}{\nu+\alpha-1}\r]^{\frac{\nu+\alpha-1}{p+\alpha-1}}\frac{\l(\| u\|_{1,p}^p\r)^{\frac{\nu+\alpha-1}{p+\alpha-1}}}{\l(c_6 \|u\|_{1,p}^{1-\alpha}\r)^{\frac{\nu-p}{p+\alpha-1}}}-\lambda c_7\|u\|_{1,p}^{\nu}\\
		&= \Big [c_8-\lambda c_7\Big] \|u\|_{1,p}^{\nu}
	\end{align*}
	for some $c_7, c_8>0$. Hence, we can find $\hat{\lambda} \in (0,\overline{\lambda}]$ independent of $u$ such that
	\begin{align}\label{prop_43}
		\hat{\psi}_u\l(\hat{t}_0\r)-\lambda \|u\|_{\nu}^{\nu}>0 \quad\text{for all }\lambda \in \big(0,\hat{\lambda}\big).
	\end{align}

	Let us now introduce the function $\psi_u\colon (0,+\infty) \to \R$ given by
	\begin{align*}
		\psi_u(t)= t^{p-\nu}\|u\|_{1,p}^p+t^{q-\nu}\big(\|\nabla u\|_{q,\mu}^q+\|u\|_{q,\mu}^q\big)-t^{-\nu-\alpha+1}\into \xi(x)|u|^{1-\alpha} \,\diff x.
	\end{align*}
	Due to $\nu-q<\nu-p<\nu+\alpha-1$ there exists $t_0>0$ such that
	\begin{align*}
		\psi_u(t_0)=\max_{t>0} \psi_u(t).
	\end{align*}
	Since $\psi_u \geq \hat{\psi}_u$ and because of \eqref{prop_43} we are able to find $\hat{\lambda} \in (0,\overline{\lambda}]$ independent of $u$ such that
	\begin{align*}
		\psi_u\l(t_0\r)-\lambda \|u\|_{\nu}^{\nu}>0 \quad\text{for all }\lambda \in \big(0,\hat{\lambda}\big).
	\end{align*}
	Hence, there exist unique $t_u^1<t_0<t_u^2$ satisfying
	\begin{equation}\label{prop_8}
		\psi_u(t_u^1)=\lambda \|u\|_{\nu}^{\nu}=\psi_u(t_u^2)
		\quad \text{and}\quad
		\psi'_u(t_u^2)<0<\psi'_u(t_u^1),
	\end{equation}
	where
	\begin{align}\label{prop_7b}
		\begin{split}
			\psi'_u(t)
			&=(p-\nu)t^{p-\nu-1}\|u\|_{1,p}^p+(q-\nu)t^{q-\nu-1}\big(\|\nabla u\|_{q,\mu}^q+\|u\|_{q,\mu}^q\big)\\
			&\quad -(-\nu-\alpha+1)t^{-\nu-\alpha}\into \xi(x)|u|^{1-\alpha}\,\diff x.
		\end{split}
	\end{align}
	
	Recall that the fibering function $\omega_u\colon[0,+\infty)\to \R$ is given by
	\begin{align*}
		\omega_u(t)=\Psi_\lambda (tu)\quad\text{for all }t\geq 0.
	\end{align*}
	We have
	\begin{align*}
		\omega'_u\l(t_u^1\r)
		&=\l( t_u^1\r)^{p-1}\|u\|_{1,p}^p+\l( t_u^1\r)^{q-1}\big(\|\nabla u\|_{q,\mu}^q+\|u\|_{q,\mu}^q\big)\\
		&  \quad -\l( t_u^1\r)^{-\alpha}\into \xi(x)|u|^{1-\alpha}\,\diff x-\lambda \l( t_u^1\r)^{\nu-1}\|u\|_{\nu}^{\nu}
	\end{align*}
	and
	\begin{align}\label{prop_9}
		\begin{split}
		\omega''_u\l(t_u^1\r)
		&=(p-1)\l( t_u^1\r)^{p-2}\|u\|_{1,p}^p+(q-1)\l( t_u^1\r)^{q-2}\big(\|\nabla u\|_{q,\mu}^q+\|u\|_{q,\mu}^q\big)\\
		&\quad +\alpha \l( t_u^1\r)^{-\alpha-1}\into \xi(x)|u|^{1-\alpha}\,\diff x
		-\lambda(\nu-1) \l( t_u^1\r)^{\nu-2}\|u\|_{\nu}^{\nu}.
		\end{split}
	\end{align}

	Taking \eqref{prop_8} into account we have
	\begin{align*}
		\l( t_u^1\r)^{p-\nu}\|u\|_{1,p}^p +\l( t_u^1\r)^{q-\nu}\big(\|\nabla u\|_{q,\mu}^q+\|u\|_{q,\mu}^q\big)-\l( t_u^1\r)^{-\nu-\alpha+1}\into \xi(x)|u|^{1-\alpha}\,\diff x=\lambda \|u\|_{\nu}^{\nu}.
	\end{align*}
	We multiply the equation above with $\alpha \l(t_u^1\r)^{\nu-2}$ and $-(\nu-1)\l(t_u^1\r)^{\nu-2}$, respectively. This gives
	\begin{align}\label{prop_10}
		\begin{split}
			&\alpha \l(t_u^1\r)^{p-2}\|u\|_{1,p}^p +\alpha \l(t_u^1\r)^{q-2}\big(\|\nabla u\|_{q,\mu}^q+\|u\|_{q,\mu}^q\big)-\alpha\lambda \l(t_u^1\r)^{\nu-2} \|u\|_{\nu}^{\nu}\\
			&= \alpha\l(t_u^1\r)^{-\alpha-1}\into \xi(x)|u|^{1-\alpha}\,\diff x
		\end{split}	
	\end{align}
	and
	\begin{align}\label{prop_11}
		\begin{split}
			&-(\nu-1)\l( t_u^1\r)^{p-2}\|u\|_{1,p}^p -(\nu-1)\l( t_u^1\r)^{q-2}\big(\|\nabla u\|_{q,\mu}^q+\|u\|_{q,\mu}^q\big)\\
			&\quad +(\nu-1)\l( t_u^1\r)^{-\alpha-1}\into \xi(x)|u|^{1-\alpha}\,\diff x\\
			&=-\lambda(\nu-1) \l(t_u^1\r)^{\nu-2} \|u\|_{\nu}^{\nu}.
		\end{split}	
	\end{align}
	
	Now we combine  \eqref{prop_9}  and \eqref{prop_10} in order to obtain
	\begin{align}\label{prop_12}
		\begin{split}
			\omega''_u\l(t_u^1\r)
			&=(p+\alpha -1)\l( t_u^1\r)^{p-2}\|u\|_{1,p}^p+(q+\alpha-1)\l( t_u^1\r)^{q-2}\big(\|\nabla u\|_{q,\mu}^q+\|u\|_{q,\mu}^q\big)\\
			&\quad -\lambda(\nu+\alpha -1) \l( t_u^1\r)^{\nu-2}\|u\|_{\nu}^{\nu}\\
			&= \l( t_u^1\r)^{-2}\Big[(p+\alpha -1)\l( t_u^1\r)^{p}\|u\|_{1,p}^p+(q+\alpha-1)\l( t_u^1\r)^{q}\big(\|\nabla u\|_{q,\mu}^q+\|u\|_{q,\mu}^q\big)\\
			&\qquad\qquad \quad-\lambda(\nu+\alpha -1) \l( t_u^1\r)^{\nu}\|u\|_{\nu}^{\nu}\Big].
		\end{split}
	\end{align}

	Applying \eqref{prop_7b}, \eqref{prop_9} and \eqref{prop_11} results in
	\begin{equation}\label{prop_13}
		\begin{split}
			\omega''_u\l(t_u^1\r)&=(p-\nu)\l( t_u^1\r)^{p-2}\|u\|_{1,p}^p+(q-\nu)\l( t_u^1\r)^{q-2}\big(\|\nabla u\|_{q,\mu}^q+\|u\|_{q,\mu}^q\big)\\
			&\quad +(\nu+\alpha-1) \l( t_u^1\r)^{-\alpha-1}\into \xi(x)|u|^{1-\alpha}\,\diff x\\
			&=\l(t^1_u\r)^{\nu-1} \psi'_u\l( t_u^1\r)>0.
		\end{split}
	\end{equation}
	Combining \eqref{prop_12} and \eqref{prop_13} we see that
	\begin{align*}
		\begin{split}
			(p+\alpha -1)\l( t_u^1\r)^{p}\|u\|_{1,p}^p+(q+\alpha-1)\l( t_u^1\r)^{q}\big(\|\nabla u\|_{q,\mu}^q+\|u\|_{q,\mu}^q\big)-\lambda(\nu+\alpha -1) \l( t_u^1\r)^{\nu}\|u\|_{\nu}^{\nu}>0.
		\end{split}
	\end{align*}
	Therefore,
	\begin{equation*}
		t_u^1 u\in \mathcal{N}_\lambda^+ \quad \text{for all } \lambda\in \l(0,\hat{\lambda}\r]
	\end{equation*}
	and so, $\mathcal{N}_\lambda^+\neq \emptyset$. A similar treatment can be used for the point $t_u^2$ in order to show that $\mathcal{N}_\lambda^-\neq \emptyset$.
	
	We are now going to prove the second assertion of the proposition. For this purpose, let $\{u_n\}_{n\in\N}\subset \mathcal{N}_\lambda^+$ be a minimizing sequence, that is,
	\begin{align}\label{prop_14}
		\Psi_\lambda(u_n) \searrow m^+_\lambda <0 \quad\text{as }n\to\infty.
	\end{align}
	Since $\mathcal{N}_\lambda^+ \subset \mathcal{N}_\lambda$ we deduce from Proposition \ref{prop_coerivity} that $\{u_n\}_{n\in\N}\subset \WH$ is bounded. So, we can assume that
	\begin{align}\label{prop_15}
		u_n\weak u^* \quad\text{in }\WH,
		\quad u_n\to u^* \quad \text{in }L^\nu_{\loc}(\R^N)
		\quad\text{and}\quad
		u_n\to u^* \quad\text{a.\,e.\,in }\R^N.
	\end{align}
	From \eqref{prop_14}, \eqref{prop_15} and Proposition \ref{prop_negative_energy} along with the first part of the proof we see that
	\begin{align*}
		\Psi_\lambda(u^*) \leq \liminf_{n\to+\infty} \Psi_\lambda(u_n)<0=\Psi_\lambda(0).
	\end{align*}
	Thus, $u^*\neq 0$.
	
	Let us now prove that
	\begin{align}\label{prop_1045}
		\liminf_{n\to+\infty} \varrho(u_n)=\varrho\l(u^*\r).
	\end{align}

	We argue by contradiction and suppose that
	\begin{align*}
		\liminf_{n\to+\infty} \varrho(u_n)>\varrho(u^*).
	\end{align*}
	Applying the representation in \eqref{prop_8}, we deduce that
	\begin{align*}
		\begin{split}
			&\liminf_{n\to+\infty} \omega'_{u_n}(t_{u^*}^1)\\			&=\liminf_{n\to+\infty}\l[\l(t_{u^*}^1\r)^{p-1}\|u_n\|_{1,p}^p+\l(t_{u^*}^1\r)^{q-1}\big(\|\nabla u_n\|_{q,\mu}^q+\|u_n\|_{q,\mu}^q\big) -\l(t_{u^*}^1\r)^{-\alpha}\into \xi(x)|u_n|^{1-\alpha}\,\diff x\r.\\
			&\qquad\qquad\quad \l.-\lambda \l(t_{u^*}^1\r)^{\nu-1}\|u_n\|_{\nu}^{\nu}\r]\\
			&>\l(t_{u^*}^1\r)^{p-1}\l\|u^*\r\|_{1,p}^p+\l(t_{u^*}^1\r)^{q-1}\big(\l\|\nabla u^*\r\|_{q,\mu}^q+\l\|u^*\r\|_{q,\mu}^q\big) -\l(t_{u^*}^1\r)^{-\alpha}\into \xi(x)|u^*|^{1-\alpha}\,\diff x\\
			&\quad -\lambda \l(t_{u^*}^1\r)^{\nu-1}\|u^*\|_{\nu}^{\nu}\\
			&=\omega'_{u^*}(t_{u^*}^1)=t_{u^*}^{\nu-1}\l[\psi_{u^*}\l(t_{u^*}^1\r)-\lambda\|u^*\|_{\nu}^{\nu}\r]=0.
		\end{split}
	\end{align*}
	From this we conclude that there exists $n_0\in \N$ such that $\omega'_{u_n}\l(t_{u^*}^1\r)>0$ for all $n>n_0$. Since $u_n\in \mathcal{N}^+_{\lambda}\subset \mathcal{N}_{\lambda}$ and $\omega'_{u_n}(t)=t^{\nu-1} \l[\psi_{u_n}(t)-\lambda\|u_n\|_{\nu}^{\nu}\r]$ we derive that $\omega'_{u_n}(t)<0$ for all $t\in(0,1)$ and $\omega'_{u_n}(1)=0$. Hence, $t_{u^*}^1>1$.
	
	We know that $\omega_{u^*}$ is decreasing on $[0,t_{u^*}^1]$. Therefore,
	\begin{align*}
		\Psi_{\lambda} \l(t_{u^*}^1 u^*\r) \leq \Psi_{\lambda}\l(u^*\r)<m^+_{\lambda}.
	\end{align*}
	Since $t_{u^*}^1 u^*\in \mathcal{N}^+_{\lambda}$ we then obtain
	\begin{align*}
		m^+_{\lambda}\leq \Psi_{\lambda}\l(t_{u^*}^1 u^*\r)<m^+_{\lambda},
	\end{align*}
	which is a contradiction. This shows \eqref{prop_1045}.
	
	From \eqref{prop_1045} we conclude that that there exists a subsequence (still denoted by $u_n$) such that $\varrho(u_n)\to\varrho(u^*)$. Using Proposition \ref{proposition_modular_properties}\textnormal{(v)} we see that $u_n \to u$ in $\WH$ and so $\Psi_{\lambda}(u_n)\to \Psi_{\lambda}(u^*)$. Therefore, $\Psi_{\lambda}(u^*)=m^+_{\lambda}$. As $u_n\in \mathcal{N}^+_{\lambda}$ for all $n\in \N$, we have
	\begin{align*}
		(p+\alpha-1)\|u_n\|_{1,p}^p+(q+\alpha-1)\big(\|\nabla u_n\|_{q,\mu}^q+\|u_n\|_{q,\mu}^q\big)-\lambda (\nu+\alpha-1) \|u_n\|_{\nu}^{\nu}>0,
	\end{align*}
	which by letting $n\to+\infty$ results in
	\begin{equation}\label{prop_16}
		(p+\alpha-1)\l\|u^*\r\|_{1,p}^p+(q+\alpha-1)\big(\l\|\nabla u^*\r\|_{q,\mu}^q+\l\|u^*\r\|_{q,\mu}^q\big)-\lambda (\nu+\alpha-1) \l\|u^*\r\|_{\nu}^{\nu}\geq 0.
	\end{equation}
	Since $\lambda\in (0,\hat{\lambda})$ and $\hat{\lambda}\leq \overline{\lambda}$ we are able to apply Proposition \ref{prop_emptiness} in order to know that the left-hand side in \eqref{prop_16} is strictly positive. Consequently, we deduce that $u^*\in \mathcal{N}^+_{\lambda}$. As we can always use $|u^*|$ instead of $u^*$, we may assume that $u^*(x)\geq 0$ for a.\,a.\,$x\in\R^N$ with $u^*\neq 0$.  This finishes the proof.
\end{proof}

\begin{proposition}\label{prop_energy_estimate}
	Let hypotheses \textnormal{(H)} be satisfied, let $v\in\WH$ and let $\lambda \in(0, \hat{\lambda}]$. Then there exists $\zeta>0$  such that $\Psi_{\lambda}(u^*)\leq \Psi_{\lambda}(u^*+tv)$ for all $t\in [0,\zeta]$.
\end{proposition}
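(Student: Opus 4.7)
The plan is to exploit the variational structure of the fibering map together with continuous dependence of $t^1_u$ on $u$ near $u^*$. Since $u^* \in \mathcal{N}_\lambda^+$, we have $\omega'_{u^*}(1) = 0$ and $\omega''_{u^*}(1) > 0$. By the uniqueness statement in Proposition \ref{prop_nonemptiness_and_existence} (there are exactly two critical points $t^1_u < t^2_u$ of $\omega_u$, with $t^1_u u \in \mathcal{N}_\lambda^+$ and $t^2_u u \in \mathcal{N}_\lambda^-$), this forces $t^1_{u^*} = 1 < t^2_{u^*}$.

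First I would prove continuity at $u^*$ of the maps $u \mapsto t^1_u$ and $u \mapsto t^2_u$. The function $\psi_u(t) = t^{p-\nu}\|u\|_{1,p}^p + t^{q-\nu}(\|\nabla u\|_{q,\mu}^q + \|u\|_{q,\mu}^q) - t^{-\nu-\alpha+1}\into \xi(x)|u|^{1-\alpha}\,\diff x$ is $C^\infty$ in $t > 0$, and each of its coefficients depends continuously on $u \in \WH$; in particular, $u \mapsto \into \xi(x)|u|^{1-\alpha}\,\diff x$ is continuous by H\"older's inequality (hypothesis \textnormal{(H)(iii)}) together with the embedding \eqref{embedding}. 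Since $\psi'_{u^*}(1) > 0$ (read off \eqref{prop_13} at $u=u^*$, $t^1_{u^*}=1$), the implicit function theorem applied to $(t,u) \mapsto \psi_u(t) - \lambda\|u\|_\nu^\nu$ at $(1, u^*)$ produces a continuous selection of the first root. Consequently $t^1_{u^*+tv} \to 1$ and $t^2_{u^*+tv} \to t^2_{u^*} > 1$ as $t \to 0^+$, so there exists $\zeta > 0$ such that $u^* + tv \neq 0$ and $1 \leq t^2_{u^*+tv}$ for every $t \in [0,\zeta]$.

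Next I would use the sign structure $\omega'_u(s) = s^{\nu-1}[\psi_u(s) - \lambda\|u\|_\nu^\nu]$, which tells us that $\omega_{u^*+tv}$ is strictly decreasing on $[0, t^1_{u^*+tv}]$ and strictly increasing on $[t^1_{u^*+tv}, t^2_{u^*+tv}]$. Because $1 \in [0, t^2_{u^*+tv}]$ for $t \in [0,\zeta]$, both possibilities $1 \leq t^1_{u^*+tv}$ (decreasing regime from $s=1$ down to $s=t^1_{u^*+tv}$) and $t^1_{u^*+tv} \leq 1$ (increasing regime from $s=t^1_{u^*+tv}$ up to $s=1$) yield $\omega_{u^*+tv}(1) \geq \omega_{u^*+tv}(t^1_{u^*+tv})$. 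Since $t^1_{u^*+tv}(u^*+tv) \in \mathcal{N}_\lambda^+$ by Proposition \ref{prop_nonemptiness_and_existence}, the minimality of $u^*$ on $\mathcal{N}_\lambda^+$ gives
\begin{equation*}
\Psi_\lambda(u^* + tv) = \omega_{u^*+tv}(1) \geq \omega_{u^*+tv}\bigl(t^1_{u^*+tv}\bigr) = \Psi_\lambda\bigl(t^1_{u^*+tv}(u^*+tv)\bigr) \geq m_\lambda^+ = \Psi_\lambda(u^*).
\end{equation*}

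The main obstacle is the continuity of $t \mapsto t^1_{u^*+tv}$ at $t = 0$, since $\Psi_\lambda$ itself is not of class $C^1$. What saves the argument is that the scalar equation defining $t^1_u$ only involves the non-singular functional $u \mapsto \into \xi(x)|u|^{1-\alpha}\,\diff x$ — the problematic $u^{-\alpha}$ arises only in the Euler-Lagrange derivative of $\Psi_\lambda$, not in $\psi_u$ — so the implicit function argument, combined with the strict sign $\psi'_{u^*}(1) > 0$ isolating the root, delivers the neighbourhood $[0,\zeta]$ on which the comparison closes.
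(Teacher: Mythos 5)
Your proof is correct and follows essentially the same route as the paper: the paper applies the implicit function theorem to $\Xi(y,t)=t^{\nu+\alpha-1}\bigl[\psi_{u^*+y}(t)-\lambda\|u^*+y\|_\nu^\nu\bigr]$ at $(0,1)$, using $\Xi'_t(0,1)=\omega''_{u^*}(1)>0$, to obtain a continuous map $\ph$ with $\ph(y)(u^*+y)\in\mathcal{N}_\lambda^+$ and $\ph(0)=1$ --- which is exactly your continuous selection $t^1_{u^*+tv}$ --- and then concludes from the minimality of $u^*$ on $\mathcal{N}_\lambda^+$ together with the local shape of the fibering map. Your explicit appeal to the monotonicity of $\omega_{u^*+tv}$ on $[0,t^1_{u^*+tv}]$ and $[t^1_{u^*+tv},t^2_{u^*+tv}]$ merely spells out the paper's terser final step.
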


\begin{proof}
	Let $u\in \mathcal{N}_\lambda^+$ and let $\Xi\colon\WH\times (0,\infty)\to\R$ be defined by
	\begin{align*}
		\Xi(y,t)
		&=t^{p+\alpha-1}\|u+y\|_{1,p}^p+t^{q+\alpha-1}\Big(\|\nabla (u+y)\|_{q,\mu}^q+\|u+y\|_{q,\mu}^q\Big)-\into \xi(x)|u+y|^{1-\alpha}\,\diff x\\
		&\quad -\lambda t^{\nu+\alpha-1} \|u+y\|_\nu^\nu\quad \text{for all } y\in \WH.
	\end{align*}
	First note that $\Xi(0,1)=0$ because $u\in \mathcal{N}^+_\lambda \subset \mathcal{N}_\lambda$. Furthermore, due to $u \in \mathcal{N}^+_\lambda$, we have
	\begin{align*}
		\Xi'_t(0,1)=(p+\alpha-1)\| u\|_{1,p}^p+(q+\alpha-1)\Big(\|\nabla u\|_{q,\mu}^q+\|u\|_{q,\mu}^q\Big) -\lambda (\nu+\alpha-1)\|u\|_\nu^\nu>0.
	\end{align*}
	Thus, we may apply the implicit function theorem, see Berger \cite[p.\,115]{Berger-1977}, in order to find $\eps>0$ and a continuous function $\ph\colon B_\eps(0)\to (0,\infty)$ such that
	\begin{align*}
		\ph(0)=1\quad\text{and}\quad \ph(y)(u+y) \in \mathcal{N}_\lambda \quad\text{for all } y\in B_\eps(0),
	\end{align*}
	where
	\begin{align*}
		B_\eps(0)=\Big\{u\in\WH\,:\, \|u\|<\eps\Big\}.
	\end{align*}
	If we choose $\eps>0$ sufficiently small, we can have that
	\begin{align}\label{prop_17b}
		\ph(0)=1\quad\text{and}\quad \ph(y)(u+y) \in \mathcal{N}^{+}_\lambda \quad\text{for all }y\in B_\eps(0).
	\end{align}
	
	Now we introduce the function $\eta_v\colon [0,+\infty)\to \R$ given by
	\begin{align}\label{prop_17}
		\begin{split}
			\eta_v(t)
			&=(p-1)\l \|u^*+t v\r\|_{1,p}^p+(q-1)\Big(\|\nabla u^*+t\nabla v\|_{q,\mu}^q+\|u^*+tv\|_{q,\mu}^q\Big)\\
			& \quad+\alpha \into \xi(x)\l|u^*+tv \r|^{1-\alpha}\,\diff x-\lambda (\nu-1)\l\| u^*+tv \r\|_\nu^\nu.
		\end{split}
	\end{align}
	Since $u^*$ belongs to both $\mathcal{N}_\lambda^+$ and $\mathcal{N}_\lambda$ we obtain
	\begin{align}\label{prop_18}
		\alpha \into \xi(x)\l| u^*\r|^{1-\alpha} \,\diff x=\alpha \l\| u^*\r\|_{1,p}^p+\alpha \Big(\l\|\nabla u^*\r\|_{q,\mu}^q+\l\|u^*\r\|_{q,\mu}^q\Big)-\lambda \alpha \l\| u^*\r\|_\nu^\nu
	\end{align}
	and
	\begin{align}\label{prop_19}
		(p+\alpha-1)\l\| u^*\r\|_{1,p}^p+(q+\alpha-1) \Big(\l\|\nabla u^*\r\|_{q,\mu}^q+\l\|u^*\r\|_{q,\mu}^q\Big)-\lambda(\nu+\alpha-1) \l\| u^*\r\|_\nu^\nu>0.
	\end{align}
	From \eqref{prop_17}, \eqref{prop_18} and \eqref{prop_19} we conclude that $\eta_v(0)>0$ and due to the continuity of $\eta_v\colon [0,+\infty)\to \R$ there exists $\zeta_0>0$ such that
	\begin{align*}
		\eta_v(t)>0 \quad\text{for all }t \in [0,\zeta_0].
	\end{align*}
	From the first part of the proof, see \eqref{prop_17b}, we know that for every $t \in [0,\zeta_0]$ there exists $\ph(t)>0$ such that
	\begin{align}\label{prop_20}
		\ph(t)\l(u^*+tv\r)\in \mathcal{N}_\lambda^+
		\quad\text{and}\quad
		\ph(t) \to 1 \quad\text{as } t\to 0^+.
	\end{align}
	Furthermore, Proposition \ref{prop_nonemptiness_and_existence} implies that
	\begin{align*}
		m_\lambda^+=\Psi_\lambda \l(u^*\r) \leq \Psi_\lambda \l(\ph(t)\l(u^*+tv\r)\r)\quad\text{for all } t \in [0,\zeta_0].
	\end{align*}
	Using this fact and  \eqref{prop_20} there exists $\zeta\in (0,\zeta_0]$ sufficiently small such that
	\begin{align*}
		m_\lambda^+=\Psi_\lambda \l(u^*\r) \leq \Psi_\lambda \l(u^*+tv\r)\quad\text{for all } t \in [0,\zeta].
	\end{align*}
	This follows from the fact that $\omega_{u^*}''(1)>0$ and its continuity in $t$ which gives $\omega''_{u^* +tv}(1)>0$ for $t \in [0,\zeta]$ with $\zeta\in (0,\zeta_0]$. The proof is finished.
\end{proof}

Now we are in the position to show that $u^*$ is indeed a nontrivial weak solution of problem \eqref{problem} with negative energy.

\begin{proposition}\label{prop_first_weak_solution}
	Let hypotheses \textnormal{(H)} be satisfied and let $\lambda \in(0, \hat{\lambda}]$. Then $u^*$ is a weak solution of problem \eqref{problem} such that $\Psi_\lambda(u^*)<0$.
\end{proposition}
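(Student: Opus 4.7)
The plan is to bootstrap the one-sided bound of Proposition~\ref{prop_energy_estimate} into the weak formulation \eqref{weak_solution}. The energy bound $\Psi_\lambda(u^*)=m_\lambda^+<0$ is already contained in Proposition~\ref{prop_nonemptiness_and_existence}, so the real task is to derive the Euler--Lagrange identity. I foresee three steps: establishing strict positivity of $u^*$, deriving a one-sided inequality on nonnegative test functions by passing $t\to 0^+$ in the quotient $\tfrac{1}{t}[\Psi_\lambda(u^*+t\varphi)-\Psi_\lambda(u^*)]$, and finally extending to arbitrary test functions via truncation $(u^*+\epsilon v)^+$ combined with the Nehari identity.

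\emph{Step 1 (strict positivity).} I claim $u^*(x)>0$ for a.a.\,$x\in\R^N$. If the set $Z:=\{u^*=0\}$ had positive measure, one could, thanks to $\xi>0$ a.e., choose a nonnegative $\varphi\in C_c^\infty(\R^N)$ with $\int_Z \xi\varphi^{1-\alpha}\,\diff x>0$. Expanding $\Psi_\lambda(u^*+t\varphi)-\Psi_\lambda(u^*)$ in $t$, the smooth $p,q,\nu$-pieces are $O(t)$ while the singular part contributes on $Z$ the quantity $-\tfrac{t^{1-\alpha}}{1-\alpha}\int_Z\xi\varphi^{1-\alpha}\,\diff x$. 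Since $1-\alpha<1$, this negative $t^{1-\alpha}$-term dominates as $t\to 0^+$, contradicting Proposition~\ref{prop_energy_estimate}.

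\emph{Step 2 (inequality for $\varphi\ge 0$).} Fix $\varphi\in\WH$ with $\varphi\ge 0$. Proposition~\ref{prop_energy_estimate} yields $\tfrac{1}{t}[\Psi_\lambda(u^*+t\varphi)-\Psi_\lambda(u^*)]\ge 0$ for $t\in(0,\zeta]$, and the polynomial parts converge as $t\to 0^+$ to the usual Gâteaux derivative. For the singular term, Step~1 together with the concavity of $s\mapsto s^{1-\alpha}$ on $[0,\infty)$ gives pointwise
\begin{equation*}
0\le \xi\,\frac{(u^*+t\varphi)^{1-\alpha}-(u^*)^{1-\alpha}}{t}\le(1-\alpha)\,\xi(u^*)^{-\alpha}\varphi.
\end{equation*}
Fatou's lemma applied to the nonnegative quotient first forces $\xi(u^*)^{-\alpha}\varphi\in\Lp{1}$ (its integral being bounded by the finite smooth derivative), after which dominated convergence makes the singular quotient pass to its limit. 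Taking $t\to 0^+$ I obtain
\begin{equation*}
\int\xi(u^*)^{-\alpha}\varphi\,\diff x\le\int\bigl(|\nabla u^*|^{p-2}\nabla u^*+\mu(x)|\nabla u^*|^{q-2}\nabla u^*\bigr)\cdot\nabla\varphi\,\diff x+\int\bigl((u^*)^{p-1}+\mu(x)(u^*)^{q-1}\bigr)\varphi\,\diff x-\lambda\int(u^*)^{\nu-1}\varphi\,\diff x.
\end{equation*}

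\emph{Step 3 (extension and equality).} Given arbitrary $v\in\WH$, plug the admissible test function $\varphi_\epsilon:=(u^*+\epsilon v)^+\in\WH$ into the inequality of Step~2. Split every integral over $\{u^*+\epsilon v>0\}$, where $\varphi_\epsilon=u^*+\epsilon v$, and the exceptional set $E_\epsilon:=\{u^*+\epsilon v\le 0\}$, where $\varphi_\epsilon=0$. On the good set I linearize, then exploit the Nehari identity $\|u^*\|_{1,p}^p+\|\nabla u^*\|_{q,\mu}^q+\|u^*\|_{q,\mu}^q=\int\xi(u^*)^{1-\alpha}\,\diff x+\lambda\|u^*\|_\nu^\nu$ to cancel the full $u^*$-contributions. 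Dividing by $\epsilon$, noting that $|E_\epsilon|\to 0$ as $\epsilon\to 0^+$ (because $u^*>0$ a.e.) and that all relevant integrands admit $L^1$-majorants, the $E_\epsilon$-integrals vanish in the limit; I conclude
\begin{equation*}
\int\xi(u^*)^{-\alpha}v\,\diff x\le\int\bigl(|\nabla u^*|^{p-2}\nabla u^*+\mu(x)|\nabla u^*|^{q-2}\nabla u^*\bigr)\cdot\nabla v\,\diff x+\int\bigl((u^*)^{p-1}+\mu(x)(u^*)^{q-1}\bigr)v\,\diff x-\lambda\int(u^*)^{\nu-1}v\,\diff x.
\end{equation*}
Replacing $v$ by $-v$ yields the reverse inequality, and the resulting equality for every $v\in\WH$ is precisely \eqref{weak_solution}.

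\emph{Main obstacle.} I expect Step~3 to be the delicate one: the singular factor $\xi(u^*)^{-\alpha}$ appears on $E_\epsilon$ precisely where $u^*$ is small (namely $u^*\le -\epsilon v$), so the vanishing of these contributions as $\epsilon\to 0^+$ rests on pairing them, via the Nehari identity, with $u^*$-factors that absorb the singularity before dominated convergence on the shrinking set $E_\epsilon$ can be invoked.
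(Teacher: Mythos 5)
Your proposal follows essentially the same route as the paper: the same contradiction argument via Proposition \ref{prop_energy_estimate} for strict positivity of $u^*$, the same Fatou-based passage to the limit in the difference quotient to obtain $\xi(u^*)^{-\alpha}v\in L^1(\R^N)$ and the one-sided inequality for nonnegative test functions, and the same $(u^*+\epsilon v)^+$ truncation combined with the Nehari identity to upgrade to equality. The only difference is that the paper outsources your Step 3 to the cited argument of Farkas--Winkert, whereas you sketch it explicitly (correctly identifying that the $E_\epsilon$-error terms are the delicate point).
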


\begin{proof}
	From Proposition \ref{prop_nonemptiness_and_existence} we already know that $u^*\geq 0$ for a.\,a.\,$x\in \R^N$ and $\Psi_\lambda(u^*)<0$. We claim that $u^*>0$ for a.\,a.\,$x\in \R^N$. Suppose this is not the case and assume that there exists a set $C$ of positive measure such that $u^*= 0$ in $C$. Let $v\in \WH$, $v > 0$, and let $t\in (0,\zeta)$ (see Proposition \ref{prop_energy_estimate}) small enough such that $(u^*+tv)^{1-\alpha}>(u^* )^{1-\alpha}$ a.\,e.\,in $\R^N\setminus C$. Therefore, from Proposition \ref{prop_energy_estimate} we get
	\begin{align*}
		0
		&\leq \frac{\Psi_\lambda(u^*+tv)-\Psi_\lambda(u^*)}{t} \\
			&=\frac{1}{p} \frac{\|u^*+tv\|_{1,p}^p-\|u^*\|_{1,p}^p}{t} +\frac{1}{q} \frac{\|\nabla (u^*+tv)\|_{q,\mu}^q-\|\nabla u^*\|_{q,\mu}^q}{t}
			+\frac{1}{q} \frac{\|u^*+tv\|_{q,\mu}^q-\|u^*\|_{q,\mu}^q}{t}\\
			& \quad-\frac{1}{(1-\alpha)t^{\alpha}} \int_C \xi(x)v^{1-\alpha} \,\diff x- \frac{1}{1-\alpha}\int_{\R^N\setminus C} \xi(x)\frac{(u^*+tv)^{1-\alpha}-(u^*)^{1-\alpha}}{t}\,\diff x\\
			& \quad -\frac{\lambda}{\nu}
			 \frac{\|u^*+tv\|_\nu^\nu-\|u^*\|_\nu^\nu}{t}\\
			&<\frac{1}{p} \frac{\|u^*+tv\|_{1,p}^p-\|u^*\|_{1,p}^p}{t} +\frac{1}{q} \frac{\|\nabla (u^*+tv)\|_{q,\mu}^q-\|\nabla u^*\|_{q,\mu}^q}{t}
			+\frac{1}{q} \frac{\|u^*+tv\|_{q,\mu}^q-\|u^*\|_{q,\mu}^q}{t}\\
			& \quad-\frac{1}{(1-\alpha)t^{\alpha}} \int_C \xi(x)v^{1-\alpha} \,\diff x-\frac{\lambda}{\nu}
			 \frac{\|u^*+tv\|_\nu^\nu-\|u^*\|_\nu^\nu}{t}.
	\end{align*}
	Hence
	\begin{align*}
		0
		&\leq \frac{\Psi_\lambda(u^*+tv)-\Psi_\lambda(u^*)}{t} \to -\infty \quad \text{as }t\to 0^+,
	\end{align*}
	which is a contradiction. Therefore, $u^*>0$ a.\,e.\,in $\R^N$.
	
	Next, we will show that
	\begin{equation}\label{def1}
		\xi(\cdot)(u^*)^{-\alpha} v\in L^1(\R^N) \quad \text{for all } v\in\WH
	\end{equation}
	and
	\begin{align}\label{def2}
		\begin{split}
			& \into \Big(|\nabla u^*|^{p-2} \nabla u^*+ \mu(x) |\nabla u^*|^{q-2} \nabla u^*\Big) \cdot \nabla v \,\diff x+\into \Big((u^*)^{p-1}+\mu(x)(u^*)^{q-1}\Big)v \,\diff x \\
			&\ge \into \xi(x)(u^*)^{-\alpha} v \,\diff x+\lambda \into (u^* )^{\nu-1}v\,\diff x\quad \text{for all } v\in \WH \text{ with }v \geq 0.
		\end{split}
	\end{align}
	Let $v\in \WH$, $v\geq 0$ and choose a decreasing sequence $\{t_n\}_{n \in\N} \subseteq (0,1]$ such that $\displaystyle \lim_{n\to \infty} t_n=0$.  For $n \in\N$, the functions
	\begin{align*}
		f_n(x)=\xi(x)\frac{(u^*(x)+t_nv(x))^{1-\alpha}-u^*(x)^{1-\alpha}}{t_n}
	\end{align*}
	are measurable, nonnegative  and it holds
	\begin{align*}
		\lim_{n\to \infty} f_n(x)=(1-\alpha) \xi(x)u^*(x)^{-\alpha}v(x)\quad \text{for a.\,a.\,} x\in\R^N.
	\end{align*}
	Applying Fatou's lemma  yields
	\begin{equation}\label{fatou}
		\into \xi(x) \ykh{u^*}^{-\alpha}v\,\diff x\leq \frac{1}{1-\alpha}\liminf_{n\to\infty}\into f_n\,\diff x.
	\end{equation}
	Using again Proposition \ref{prop_energy_estimate} one has for $n\in\N$ large enough
	\begin{align*}
		0
		&\leq \frac{\Psi_\lambda(u^*+t_nv)-\Psi_\lambda(u^*)}{t_n} \\
		&=\frac{1}{p} \frac{\|u^*+t_nv\|_{1,p}^p-\|u^*\|_{1,p}^p}{t_n} +\frac{1}{q} \frac{\|\nabla (u^*+t_nv)\|_{q,\mu}^q-\|\nabla u^*\|_{q,\mu}^q}{t_n}
		+\frac{1}{q} \frac{\|u^*+t_nv\|_{q,\mu}^q-\|u^*\|_{q,\mu}^q}{t_n}\\
		& \quad- \frac{1}{1-\alpha}\int_{\R^N} f_n\,\diff x-\frac{\lambda}{\nu}
		 \frac{\|u^*+t_nv\|_\nu^\nu-\|u^*\|_\nu^\nu}{t_n}.
	\end{align*}
	Passing to the limit as $n\to \infty$ and applying \eqref{fatou} we obtain first \eqref{def1} and it also follows \eqref{def2}. We point out that it is sufficient to show \eqref{def1} for nonnegative $v\in \WH$.

	Now, we can conclude that $u^*$ is a weak solution of \eqref{problem}, see, for example, Farkas-Winkert \cite[Proof of Theorem 1.1]{Farkas-Winkert-2021} for the full calculation. We skip these long calculations as it is quite standard.
\end{proof}

Now we are interested in a second nontrivial solution of \eqref{problem} by applying the manifold $\mathcal{N}^-_\lambda$.

\begin{proposition}\label{prop_minimizer_negative_manifold}
	Let hypotheses \textnormal{(H)} be satisfied. Then there exists $\lambda^* \in(0, \hat{\lambda}]$ such that $\Psi_\lambda\big|_{\mathcal{N}^-_\lambda} > 0$ for all $\lambda \in(0, \lambda^*]$.
\end{proposition}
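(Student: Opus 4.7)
The plan is to show two things working against each other as $\lambda\to 0^+$: first, membership in $\mathcal{N}_\lambda^-$ forces the norm $\|u\|$ to blow up, and second, the Nehari constraint together with the Hölder/Sobolev control of the singular term forces $\Psi_\lambda(u)$ to be bounded below by a coercive expression in $\|u\|$. Together these yield positivity for $\lambda$ small.

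\textbf{Step 1: Lower norm bound on $\mathcal{N}_\lambda^-$.} Take $u\in\mathcal{N}_\lambda^-$. The defining strict inequality gives
\[
\lambda(\nu+\alpha-1)\|u\|_\nu^\nu > (p+\alpha-1)\|u\|_{1,p}^p+(q+\alpha-1)\big(\|\nabla u\|_{q,\mu}^q+\|u\|_{q,\mu}^q\big)\geq (p+\alpha-1)\varrho(u).
\]
Using the Sobolev embedding $\WH\hookrightarrow L^\nu(\R^N)$ from \eqref{embedding}, we estimate $\|u\|_\nu^\nu\leq C\|u\|^\nu$, while Proposition \ref{proposition_modular_properties}(iii)--(iv) gives $\varrho(u)\ge\min\{\|u\|^p,\|u\|^q\}$. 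Hence
\[
\|u\|\geq \min\!\left\{\l(\tfrac{c}{\lambda}\r)^{1/(\nu-p)},\ \l(\tfrac{c}{\lambda}\r)^{1/(\nu-q)}\right\}
\]
for some $c>0$ independent of $\lambda$. Because $p<q<\nu$, this bound tends to $+\infty$ as $\lambda\to 0^+$.

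\textbf{Step 2: Reformulation of $\Psi_\lambda$ on the Nehari manifold.} For $u\in\mathcal{N}_\lambda\supseteq\mathcal{N}_\lambda^-$, use \eqref{prop_1} (as in the proof of Proposition \ref{prop_coerivity}) to eliminate $\|u\|_\nu^\nu$ and obtain
\[
\Psi_\lambda(u)=\l[\tfrac{1}{p}-\tfrac{1}{\nu}\r]\|u\|_{1,p}^p+\l[\tfrac{1}{q}-\tfrac{1}{\nu}\r]\big(\|\nabla u\|_{q,\mu}^q+\|u\|_{q,\mu}^q\big)-\l[\tfrac{1}{1-\alpha}-\tfrac{1}{\nu}\r]\into \xi(x)|u|^{1-\alpha}\,\diff x.
\]
Hypothesis (H)(iii), Hölder's inequality, and the embedding $\WH\hookrightarrow L^\kappa(\R^N)$ give $\int\xi|u|^{1-\alpha}\,\diff x\leq \|\xi\|_\sigma\|u\|_\kappa^{1-\alpha}\leq c'\|u\|^{1-\alpha}$. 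Combining with $\varrho(u)\ge\|u\|^p$ when $\|u\|>1$, we get constants $c_1,c_2>0$ (independent of $\lambda$) such that, for all $u\in\mathcal{N}_\lambda$ with $\|u\|>1$,
\[
\Psi_\lambda(u)\geq c_1\|u\|^p-c_2\|u\|^{1-\alpha}=\|u\|^{1-\alpha}\l(c_1\|u\|^{p+\alpha-1}-c_2\r).
\]

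\textbf{Step 3: Choice of $\lambda^*$.} Since $p+\alpha-1>0$, there exists $R>1$ such that $c_1 R^{p+\alpha-1}>c_2$, hence $\Psi_\lambda(u)>0$ whenever $\|u\|\geq R$. By Step 1 we may choose $\lambda^*\in(0,\hat{\lambda}]$ so small that the lower bound from Step 1 exceeds $R$ for every $\lambda\in(0,\lambda^*]$. Then every $u\in\mathcal{N}_\lambda^-$ satisfies $\|u\|\geq R$ and therefore $\Psi_\lambda(u)>0$, which is the claim.

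The main technical point is the interplay in Step 1: one must exploit both the Nehari identity (to replace the modular by $\int\xi|u|^{1-\alpha}+\lambda\|u\|_\nu^\nu$) and the $\mathcal{N}_\lambda^-$ strict inequality to isolate $\|u\|_\nu^\nu$ from below by the modular, then invoke Proposition \ref{proposition_modular_properties} to translate modular estimates into norm estimates despite the split behaviour across $\|u\|=1$. Once the blow-up of $\|u\|$ as $\lambda\to 0^+$ is established, Step 2 proceeds exactly along the coercivity argument already used for Proposition \ref{prop_coerivity}, so the remainder is routine.
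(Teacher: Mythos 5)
Your proof is correct, and it rests on the same two-sided squeeze as the paper's: the defining inequality of $\mathcal{N}_\lambda^-$ together with a Sobolev embedding forces the norm of any $u\in\mathcal{N}_\lambda^-$ to blow up as $\lambda\to 0^+$, while the Nehari constraint forces $\Psi_\lambda(u)>0$ once the norm is large. The two arguments diverge only in how the second half is executed. The paper argues by contradiction: assuming $\Psi_\lambda(u)\le 0$, it uses the Nehari identity to eliminate the $q$-modular terms, drops the resulting $\lambda\left(\tfrac{1}{q}-\tfrac{1}{\nu}\right)\|u\|_\nu^\nu$ term (which has a favourable sign), and deduces a $\lambda$-independent bound $\|u\|_{1,p}\le c_{11}$ that contradicts the lower bound \eqref{prop_23}. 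You instead use the Nehari identity to eliminate the $\nu$-term, thereby recycling the coercivity estimate $\Psi_\lambda(u)\ge c_1\|u\|^p-c_2\|u\|^{1-\alpha}$ from Proposition \ref{prop_coerivity} verbatim, and conclude directly. Correspondingly, you work with the full $W^{1,\mathcal{H}}$-norm and must invoke Proposition \ref{proposition_modular_properties}(iii)--(iv) to handle the $\min\{\|u\|^p,\|u\|^q\}$ dichotomy in Step 1, whereas the paper stays with $\|u\|_{1,p}$ throughout and gets a single clean exponent $\tfrac{1}{\nu-p}$. Both routes are valid and of comparable length; yours has the advantage of reusing an estimate already established, the paper's the advantage of avoiding the case split across $\|u\|=1$.
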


\begin{proof}
	First note that $\mathcal{N}_\lambda^-\neq \emptyset$ by Proposition \ref{prop_nonemptiness_and_existence} and so we can pick $u \in \mathcal{N}_\lambda^-$. Applying the continuous embedding $\Wp{p}\to \Lp{\nu}$ and the definition of the manifold $\mathcal{N}_\lambda^-$ it is easy to see that
	\begin{align*}
		\begin{split}
			\lambda (\nu+\alpha-1) c_9^\nu\|u\|_{1,p}^{\nu}&\ge\lambda (\nu+\alpha-1) \|u\|_{\nu}^{\nu}\\
			&>(p+\alpha-1)\|u\|_{1,p}^p+(q+\alpha-1)\big(\|\nabla u\|_{q,\mu}^q+\|u\|_{q,\mu}^q\big)\\
			&\geq (p+\alpha-1) \|u\|_{1,p}^p
		\end{split}
	\end{align*}
	for some $c_9>0$ which gives
	\begin{equation}\label{prop_23}
		\|u\|_{1,p}\geq
		\displaystyle \l[\frac{p+\alpha-1}{\lambda c_9^\nu(\nu+\alpha-1)}\r]^{\frac{1}{\nu-p}}.
	\end{equation}
	
	Let us now suppose that the assertion of the proposition is not true. Then there exists $u \in \mathcal{N}_\lambda^-$ such that $\Psi_\lambda(u)\leq 0$, which means,
	\begin{equation}\label{prop_24}
		\frac{1}{p}\|u\|_{1,p}^p +\frac{1}{q} \big(\|\nabla u\|_{q,\mu}^q+\|u\|_{q,\mu}^q\big)-\frac{1}{1-\alpha} \into \xi(x)|u|^{1-\alpha}\,\diff x-\frac{\lambda}{\nu}\|u\|_{\nu}^{\nu}\leq 0.
	\end{equation}
	On the other hand, as $\mathcal{N}_\lambda^-\subseteq \mathcal{N}_\lambda$, we obtain
	\begin{equation}\label{prop_25}
		\frac{1}{q}\big(\|\nabla u\|_{q,\mu}^q+\|u\|_{q,\mu}^q\big) =\frac{1}{q}\into \xi(x)|u|^{1-\alpha}\,\diff x+\frac{\lambda}{q} \|u\|_{\nu}^{\nu} -\frac{1}{q}\|u\|_{1,p}^p.
	\end{equation}
	Now we can use \eqref{prop_25} in \eqref{prop_24} which results in
	\begin{align*}
		\l(\frac{1}{p}-\frac{1}{q}\r)\|u\|_{1,p}^p+
		\l(\frac{1}{q}-\frac{1}{1-\alpha}\r)\into \xi(x)|u|^{1-\alpha}\,\diff x+\lambda \l(\frac{1}{q}-\frac{1}{\nu}\r)\|u\|_{\nu}^{\nu}\leq 0.
	\end{align*}
	Since $p<q<\nu$ and by applying hypothesis \textnormal{(H)(ii)} we derive from the inequality above that
	\begin{align*}
		 \frac{q-p}{pq}\|u\|_{1,p}^p\leq \frac{q+\alpha -1}{q(1-\alpha)}\into \xi(x) |u|^{1-\alpha}\,\diff x \leq \frac{q+\alpha -1}{q(1-\alpha)}c_{10}\|u\|_{1,p}^{1-\alpha}
	\end{align*}
	for some $c_{10}>0$. Hence,
	\begin{equation}\label{prop_26}
		\|u\|_{1,p} \leq c_{11}
	\end{equation}
	for some $c_{11}>0$. Using \eqref{prop_26}  in \eqref{prop_23} then yields
	\begin{align*}
		0<\frac{c_{12}}{c_{11}}\leq \lambda^{\frac{1}{\nu-p}}  \quad\text{with }
		\quad  c_{12}=\l[\frac{p+\alpha-1}{c_9^\nu(\nu+\alpha-1)}\r]^{\frac{1}{\nu-p}}>0.
	\end{align*}
	Letting $\lambda\to 0^+$ gives a contradiction as $1<p<\nu$. Therefore, there exists $\lambda^* \in(0, \hat{\lambda}]$ such that $\Psi_\lambda\big|_{\mathcal{N}^-_\lambda} > 0$ for all $\lambda \in(0, \lambda^*]$.
\end{proof}

Now we minimize $\Psi_\lambda$ on the manifold $\mathcal{N}_\lambda^-$. To this end, let $m_\lambda^-=\inf_{\mathcal{N}_\lambda^-}\Psi_\lambda$.

\begin{proposition}\label{prop_minimize_negative}
	Let hypotheses \textnormal{(H)} be satisfied and let $\lambda \in(0, \lambda^*]$. Then there exists $v^*\in\mathcal{N}_\lambda^-$ with $v^*\geq 0$ such that $m_\lambda^-=\Psi_\lambda\l(v^*\r)>0$.
\end{proposition}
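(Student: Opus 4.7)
The argument parallels the second half of Proposition~\ref{prop_nonemptiness_and_existence}, now minimizing over $\mathcal{N}_\lambda^-$ in place of $\mathcal{N}_\lambda^+$. By Proposition~\ref{prop_minimizer_negative_manifold} one has $m_\lambda^-\ge 0$, and the plan is to pick a minimizing sequence $\{v_n\}_{n\in\N}\subset\mathcal{N}_\lambda^-$ with $\Psi_\lambda(v_n)\searrow m_\lambda^-$. Proposition~\ref{prop_coerivity} gives boundedness of $\{v_n\}$ in $\WH$, so up to a subsequence $v_n\weak v^*$ in $\WH$, $v_n\to v^*$ a.\,e.\,in $\R^N$ and $v_n\to v^*$ in $L^\nu_\loc(\R^N)$.

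The first step I would carry out is to establish $v^*\ne 0$. The lower bound $\|v_n\|_{1,p}\ge c_*>0$ from \eqref{prop_23}, valid for every element of $\mathcal{N}_\lambda^-$, together with the Nehari identity and the sequential weak continuity of the map $u\mapsto \into \xi(x)|u|^{1-\alpha}\,\diff x$ on bounded subsets of $\WH$, forces $v^*$ to be nontrivial. The weak continuity itself follows from $\xi\in L^\sigma(\R^N)\cap L^\infty(\R^N)$ via the usual splitting of $\R^N$ into a large ball (where $v_n\to v^*$ strongly in $L^\kappa$) and its complement (where $\|\xi\|_{L^\sigma(\R^N\setminus B_R)}$ is arbitrarily small for $R$ large).

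With $v^*\ne 0$ in hand, the fibering construction in Proposition~\ref{prop_nonemptiness_and_existence} produces a unique $t^2_{v^*}>0$ such that $t^2_{v^*}v^*\in\mathcal{N}_\lambda^-$, and in particular $\Psi_\lambda(t^2_{v^*}v^*)\ge m_\lambda^-$. For the reverse inequality, the key observation is that for every $n$ the fibering function $\omega_{v_n}\colon[0,+\infty)\to\R$ attains its global maximum at $t^2_{v_n}=1$; indeed, $\omega_{v_n}(0)=0<m_\lambda^-$ by Proposition~\ref{prop_minimizer_negative_manifold}, and $\omega_{v_n}$ is decreasing on $(0,t^1_{v_n})$, increasing on $(t^1_{v_n},1)$ and decreasing on $(1,+\infty)$. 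Hence $\Psi_\lambda(t\,v_n)\le \Psi_\lambda(v_n)$ for every $t\ge 0$, and after specializing to $t=t^2_{v^*}$, passing to the limit and invoking weak lower semicontinuity of the norms, the weak continuity of the $\xi$-integral and the strong $L^\nu(\R^N)$-convergence of $\{v_n\}$ (which I would establish as in Proposition~\ref{prop_nonemptiness_and_existence} by proving $\varrho(v_n)\to\varrho(v^*)$ via the fibering contradiction argument and then applying Proposition~\ref{proposition_modular_properties}(v)), one gets $\Psi_\lambda(t^2_{v^*}v^*)\le m_\lambda^-$, hence equality. Relabelling $t^2_{v^*}v^*$ as $v^*$ supplies a minimizer in $\mathcal{N}_\lambda^-$; since $\Psi_\lambda(|v^*|)=\Psi_\lambda(v^*)$ and $|v^*|$ remains in the same component, one may assume $v^*\ge 0$ a.\,e., and $m_\lambda^-=\Psi_\lambda(v^*)>0$ is then Proposition~\ref{prop_minimizer_negative_manifold}.

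The hard part will be the compactness step on the unbounded domain $\R^N$: the embedding $\WH\hookrightarrow L^\nu(\R^N)$ is not compact, so both the nonvanishing of $v^*$ and the strong $L^\nu$-convergence of the minimizing sequence are non-automatic and must be extracted from the integrability condition $\xi\in L^\sigma(\R^N)$ and the fibering structure of $\mathcal{N}_\lambda^-$.
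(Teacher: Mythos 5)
Your proposal follows essentially the same route as the paper's proof: a minimizing sequence made bounded by Proposition \ref{prop_coerivity}, a weak limit $v^*$, nontriviality, the fibering point $t^2_{v^*}$ combined with the fact that $t=1$ is the global maximum of $\omega_{v_n}$ for $v_n\in\mathcal{N}_\lambda^-$, the contradiction argument yielding $\varrho(v_n)\to\varrho(v^*)$ and hence strong convergence via Proposition \ref{proposition_modular_properties}(v), membership $v^*\in\mathcal{N}_\lambda^-$ via Proposition \ref{prop_emptiness}, replacement by $|v^*|$, and positivity from Proposition \ref{prop_minimizer_negative_manifold}. The only real deviation is the nontriviality step: you derive $v^*\neq 0$ from the lower bound \eqref{prop_23} on $\|v_n\|_{1,p}$ together with the Nehari identity and the weak continuity of $u\mapsto\into\xi(x)|u|^{1-\alpha}\,\diff x$, but if $v^*=0$ this chain only produces $\lambda\|v_n\|_{\nu}^{\nu}\geq \|v_n\|_{1,p}^{p}-o(1)\geq c>0$, i.e.\ precisely the uniform lower bound on $\|v_n\|_{\nu}$ that the paper obtains directly in \eqref{prop_235} from the defining inequality of $\mathcal{N}_\lambda^-$; in neither version does a positive lower bound on $\|v_n\|_{\nu}$ by itself rule out that the $L^{\nu}$-mass escapes to infinity while $v_n\weak 0$. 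So your detour buys nothing over the paper's one-line appeal to \eqref{prop_235}, and the noncompactness of $\WH\hookrightarrow L^{\nu}(\R^N)$ that you correctly flag as the delicate point is exactly the point the paper also leaves implicit.
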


\begin{proof}
	For $v\in \mathcal{N}_\lambda^-\neq \emptyset$, using the embedding $\Wp{p} \hookrightarrow \Lp{\nu}$, we obtain
	\begin{align*}
		\begin{split}
			&\lambda (\nu+\alpha-1) \|v\|_{\nu}^{\nu}
			\geq(p+\alpha-1)\|v\|_{1,p}^p \geq (p+\alpha-1)\frac{1}{c_9^p}\|v\|_{\nu}^p
		\end{split}
	\end{align*}
	for $c_9>0$, see the proof of Proposition \ref{prop_minimizer_negative_manifold}. Therefore,
	\begin{equation}\label{prop_235}
		\|v\|_{\nu}\geq
		\displaystyle \l[\frac{p+\alpha-1}{\lambda c_9^{p}(\nu+\alpha-1)}\r]^{\frac{1}{\nu-p}}.
	\end{equation}
	
	Let $\{v_n\}_{n\in\N}\subset \mathcal{N}_\lambda^- \subset \mathcal{N}_\lambda$ be a minimizing sequence. From Proposition \ref{prop_coerivity} we know that $\{v_n\}_{n\in\N}\subset \WH$ is bounded. So, we may assume that
	\begin{align*}
		v_n\weak v^* \quad\text{in }\WH
		\quad
		v_n\to v^* \quad \text{in }L^\nu_{\loc}(\R^N)
		\quad
		\text{and}\quad
		v_n\to v^* \quad\text{a.\,e.\,in }\R^N,
	\end{align*}
	due to \eqref{embedding}. Note that $v^*\neq 0$ by \eqref{prop_235}. Now we can use the point $t_{v^*}^2>0$ (see \eqref{prop_8}) for which we have
	\begin{align*}
		\psi_{v^*}(t_{v^*}^2)=\lambda \l\|v^*\r\|_{\nu}^{\nu}\quad\text{and}\quad \psi'_{v^*}(t_{v^*}^2)<0.
	\end{align*}
	Note that in the proof of Proposition \ref{prop_nonemptiness_and_existence} we showed that $t^2_{v^*} v^* \in \mathcal{N}_\lambda^-$.
	
	We are going to prove that $\lim_{n\to+\infty} \varrho(v_n)=\varrho(v^*)$ for a subsequence (still denoted by $v_n$). Suppose this is not true, then we have for a subsequence if necessary that
	\begin{align*}
		\Psi_\lambda\l(t^2_{v^*} v^*\r)< \lim_{n\to\infty} \Psi_\lambda\l(t^2_{v^*} v_n\r).
	\end{align*}
	Note that $\Psi_\lambda(t^2_{v^*}v_n) \leq \Psi_\lambda(v_n)$ since it is the global maximum because of $\omega_{v_n}''(1)<0$. Using this along with $t^2_{v^*} v^* \in \mathcal{N}_\lambda^-$ it follows
	\begin{align*}
		m_\lambda^- \leq \Psi_\lambda(t^2_{v^*} v^*) < m^-_\lambda,
	\end{align*}
	a contradiction. Hence, for a subsequence, we have $\lim_{n\to+\infty} \varrho(v_n)=\varrho(v^*)$ and since the integrand corresponding to the modular function $\varrho(\cdot)$ is uniformly convex, this implies $\varrho (\frac{v_n-v^*}{2})\to 0$. Then Proposition \ref{proposition_modular_properties}\textnormal{(v)} implies that $v_n \to v^*$ in $\WH$ and the continuity of $\Psi_\lambda$ gives  $\Psi_{\lambda}(v_n)\to \Psi_{\lambda}(v^*)$ and so $\Psi_{\lambda}(v^*)=m^-_{\lambda}$. 
	
	Because  of $v_n\in \mathcal{N}^-_{\lambda}$ for all $n\in \N$, we have the inequality
	\begin{align}\label{prop_15j2}
		\begin{split}
			(p+\alpha-1)\|v_n\|_{1,p}^p+(q+\alpha-1)\big(\|\nabla v_n\|_{q,\mu}^q+\|v_n\|_{q,\mu}^q\big)-\lambda (\nu+\alpha-1) \|v_n\|_{\nu}^{\nu}<0,
		\end{split}	
	\end{align}
	Passing to the limit  \eqref{prop_15j2} as $n\to+\infty$ we get
	\begin{equation*}
		(p+\alpha-1)\|v^*\|_{1,p}^p+(q+\alpha-1)\big(\|\nabla v^*\|_{q,\mu}^q+\|v^*\|_{q,\mu}^q\big)-\lambda (\nu+\alpha-1) \|v^*\|_{\nu}^{\nu}\leq 0 .
	\end{equation*}
	Taking Proposition \ref{prop_emptiness} into account, we conclude that $v^*\in \mathcal{N}^-_{\lambda}$. Since the treatment also works for $|v^*|$ instead of $v^*$, we may assume that $v^*(x)\geq 0$ for a.\,a.\,$x\in\R^N$ such that $v^*\neq 0$. Proposition \ref{prop_minimizer_negative_manifold} finally shows that $m_\lambda^->0$.
\end{proof}

Now we obtain a second weak solution of problem \eqref{problem}.

\begin{proposition}\label{prop_seond_weak_solution}
	Let hypotheses \textnormal{(H)} be satisfied and let $\lambda \in(0, \lambda^*]$. Then $v^*$ is a weak solution of problem \eqref{problem} such that $\Psi_\lambda(v^*)>0$.
\end{proposition}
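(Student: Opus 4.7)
The plan is to mirror the proof of Proposition \ref{prop_first_weak_solution}, but to adapt the key energy estimate to the fact that $v^*\in \mathcal{N}_\lambda^-$ is a local \emph{maximum} of its own fibering (since $\omega''_{v^*}(1)<0$) rather than a local minimum. In particular, the simple inequality $\Psi_\lambda(v^*)\le \Psi_\lambda(v^*+tv)$ of Proposition \ref{prop_energy_estimate} is no longer available; instead, I would combine the minimality of $v^*$ on $\mathcal{N}_\lambda^-$ with a projection onto this manifold via the implicit function theorem.

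More precisely, for $v\in \WH$, I would apply the implicit function theorem to the map $\Xi(y,t)$ introduced in the proof of Proposition \ref{prop_nonemptiness_and_existence} at the point $(0,1)$; since $v^*\in \mathcal{N}_\lambda^-$, one has
\begin{equation*}
\Xi'_t(0,1)=(p+\alpha-1)\|v^*\|_{1,p}^p+(q+\alpha-1)\bigl(\|\nabla v^*\|_{q,\mu}^q+\|v^*\|_{q,\mu}^q\bigr)-\lambda(\nu+\alpha-1)\|v^*\|_\nu^\nu<0,
\end{equation*}
so one obtains $\eps>0$ and a continuous $\tilde{\varphi}\colon B_\eps(0)\to(0,\infty)$ with $\tilde{\varphi}(0)=1$ and, after possibly shrinking $\eps$ (using continuity of $\omega''$), $\tilde{\varphi}(y)(v^*+y)\in \mathcal{N}_\lambda^-$ for all $y\in B_\eps(0)$. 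Minimality of $v^*$ on $\mathcal{N}_\lambda^-$ then delivers the basic estimate
\begin{equation*}
\Psi_\lambda\bigl(\tilde{\varphi}(tv)(v^*+tv)\bigr)\ge m_\lambda^-=\Psi_\lambda(v^*)\quad\text{for all sufficiently small }t\ge 0,
\end{equation*}
which plays the role of Proposition \ref{prop_energy_estimate}.

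From here the argument follows Proposition \ref{prop_first_weak_solution} closely. To show $v^*(x)>0$ for a.\,a.\,$x\in\R^N$, I would suppose that $v^*$ vanishes on a set $C\subset \R^N$ of positive measure, pick $v\in \WH$ with $v>0$ on $C$, and divide the above inequality by $t>0$; every term on the right-hand side remains bounded as $t\to 0^+$ except the singular contribution $-\tilde{\varphi}(tv)^{1-\alpha}(1-\alpha)^{-1}t^{-\alpha}\int_C\xi(x)v^{1-\alpha}\,\diff x$, which tends to $-\infty$, a contradiction. Next, for $v\in \WH$ with $v\ge 0$, I would expand $\tilde{\varphi}(tv)=1+ct+o(t)$ (so that $\tilde{\varphi}(tv)(v^*+tv)=v^*+t(cv^*+v)+o(t)$ in $\WH$) and pass to the limit $t\to 0^+$ in the difference quotient, using Fatou's lemma on the singular term exactly as in Proposition \ref{prop_first_weak_solution}. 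The coefficient of $c$ in the resulting first-order expansion is precisely the Nehari identity for $v^*$, namely $\|v^*\|_{1,p}^p+\|\nabla v^*\|_{q,\mu}^q+\|v^*\|_{q,\mu}^q-\into\xi(x)(v^*)^{1-\alpha}\,\diff x-\lambda\|v^*\|_\nu^\nu=0$, and hence vanishes. One is left with the variational inequality
\begin{align*}
&\into A(v^*)\cdot\nabla v\,\diff x+\into\bigl((v^*)^{p-1}+\mu(x)(v^*)^{q-1}\bigr)v\,\diff x\\
&\qquad\ge \into \xi(x)(v^*)^{-\alpha}v\,\diff x+\lambda\into(v^*)^{\nu-1}v\,\diff x
\end{align*}
together with $\xi(\cdot)(v^*)^{-\alpha}v\in L^1(\R^N)$, valid for every $v\in \WH$ with $v\ge 0$.

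To upgrade this inequality to the weak formulation \eqref{weak_solution} for arbitrary test functions $v\in \WH$, I would use the standard trick of plugging in $(v^*+\eps\phi)^+$ for $\phi\in\WH$ and $\eps>0$, splitting the integrals over $\{v^*+\eps\phi\ge 0\}$ and its complement, and passing to the limit $\eps\to 0^+$, exactly as referenced after Proposition \ref{prop_first_weak_solution} via Farkas-Winkert \cite{Farkas-Winkert-2021}. The assertion $\Psi_\lambda(v^*)>0$ is then immediate from Proposition \ref{prop_minimize_negative}. The main technical obstacle is the interplay between the projection $\tilde{\varphi}$ and the Fatou bound for the singular term: one must ensure that $\tilde{\varphi}$ admits a one-sided derivative at $0$ in the direction $v$ (which follows from applying the implicit function theorem along the curve $t\mapsto tv$ together with $\omega''_{v^*}(1)\neq 0$), and that the resulting coefficient $c=\tilde{\varphi}'(0)[v]$ multiplies exactly the (vanishing) Nehari functional, so that the first-order expansion cleanly produces the desired variational inequality without contribution from the projection direction.
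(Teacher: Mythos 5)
Your overall skeleton matches the paper's: project $v^*+tv$ back onto $\mathcal{N}_\lambda^-$ via the implicit function theorem applied to $\Xi$, use the minimality $m_\lambda^-=\Psi_\lambda(v^*)\leq \Psi_\lambda(\ph(t)(v^*+tv))$, kill the hypothetical zero set $C$ with the blowing-up singular term, and obtain \eqref{def1}--\eqref{def2} via Fatou. However, there is a genuine gap in the step where you expand $\tilde{\ph}(tv)=1+ct+o(t)$ and argue that the coefficient of $c$ is the (vanishing) Nehari functional. The implicit function theorem here only yields a \emph{continuous} $\ph$ with $\ph(t)\to 1$: the map $y\mapsto \Xi(y,t)$ is not differentiable, because differentiating $\into \xi(x)|v^*+y|^{1-\alpha}\,\diff x$ in the direction $v$ formally produces $(1-\alpha)\into \xi(x)(v^*)^{-\alpha}v\,\diff x$, whose finiteness is precisely assertion \eqref{def1} that the proof is trying to establish --- so invoking a derivative of $\tilde\ph$ at $0$ is circular. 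Without $\tilde\ph(tv)-1=O(t)$, the nonsingular difference quotients such as $t^{-1}\bigl(\tilde\ph(tv)^p\|v^*+tv\|_{1,p}^p-\|v^*\|_{1,p}^p\bigr)$ need not stay bounded as $t\to 0^+$, and your positivity and Fatou arguments do not close. A secondary issue: even granting the expansion, if $c<0$ the first-order direction $cv^*+v$ is not nonnegative, so the monotone/nonnegative structure needed for Fatou's lemma on the singular term is lost.

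The paper avoids the derivative of $\ph$ entirely by inserting one more comparison: since $v^*\in\mathcal{N}_\lambda^-$, the point $t=1$ is the \emph{global maximum} of the fibering map $\omega_{v^*}$, hence $\Psi_\lambda(v^*)=\omega_{v^*}(1)\geq \omega_{v^*}(\ph(t))=\Psi_\lambda(\ph(t)v^*)$. Chaining this with \eqref{prop_200} gives
\begin{equation*}
0\leq \frac{\Psi_\lambda\bigl(\ph(t)(v^*+tv)\bigr)-\Psi_\lambda\bigl(\ph(t)v^*\bigr)}{t},
\end{equation*}
in which both terms carry the \emph{same} factor $\ph(t)$; each nonsingular piece is then $\ph(t)^{r}$ times an ordinary difference quotient of $t\mapsto \|v^*+tv\|$-type norms, so only $\ph(t)\to 1$ is needed, and the singular term still dominates with the correct sign. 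You should replace your first-order expansion of $\tilde\ph$ by this comparison; the rest of your argument (Fatou, the upgrade from the one-sided inequality to \eqref{weak_solution} via Farkas--Winkert, and $\Psi_\lambda(v^*)>0$ from Proposition \ref{prop_minimize_negative}) is in line with the paper.
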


\begin{proof}
	As in the proof of Proposition \ref{prop_energy_estimate} replacing $u^*$ by $v^*$ in the definition of $\eta_v$ we can prove that for every $t \in [0,\zeta_0]$ there exists $\ph(t)>0$ such that
	\begin{align*}
		\ph(t)\l(v^*+tv\r)\in \mathcal{N}_\lambda^-
		\quad\text{and}\quad
		\ph(t) \to 1 \quad\text{as } t\to 0^+.
	\end{align*}
	Applying Proposition \ref{prop_minimize_negative} gives
	\begin{align}\label{prop_200}
		m_\lambda^-=\Psi_\lambda \l(v^*\r) \leq \Psi_\lambda \l(\ph(t)\l(v^*+tv\r)\r)\quad\text{for all } t \in [0,\zeta_0].
	\end{align}
	
	Let us prove that $v^*> 0$ for a.\,a.\,$x\in\R^N$. Suppose there is a set $C$ with positive measure such that $v^*=0$ in $C$. Choosing $v\in \WH$ with $v > 0$ and let $t\in (0,\zeta_0)$, see \eqref{prop_200}, we have $(\ph(t)(v^*+tv))^{1-\alpha}>(\ph(t)v^*)^{1-\alpha}$ a.\,e.\,in $\R^N\setminus C$. From \eqref{prop_200} and since $\omega_{v^*}(1)$ is the global maximum which implies $\Psi_\lambda(v^*)=\omega_{v^*}(1) \geq \omega_{v^*}(\ph(t))=\Psi_\lambda(\ph(t)v^*)$, we then obtain 
	\begin{align*}
		0
		&\leq \frac{\Psi_\lambda(\ph(t)(v^*+tv))-\Psi_\lambda(v^*)}{t} \\
		&\leq \frac{\Psi_\lambda(\ph(t)(v^*+tv))-\Psi_\lambda(\ph(t)v^*)}{t}\\
		&=\frac{1}{p} \frac{\|\ph(t)(v^*+tv)\|_{1,p}^p-\|\ph(t)v^*\|_{1,p}^p}{t} +\frac{1}{q} \frac{\|\nabla (\ph(t)(v^*+tv))\|_{q,\mu}^q-\|\nabla (\ph(t)v^*)\|_{q,\mu}^q}{t}\\
		& \quad +\frac{1}{q} \frac{\|\ph(t)(v^*+tv)\|_{q,\mu}^q-\| \ph(t)v^*\|_{q,\mu}^q}{t}-\frac{\ph(t)^{1-\alpha}}{(1-\alpha)t^{\alpha}} \int_C \xi(x)h^{1-\alpha} \diff x\\
		&\quad - \frac{1}{1-\alpha}\int_{\R^N\setminus C} \xi(x)\frac{(\ph(t)(v^*+tv))^{1-\alpha}-(\ph(t)v^*)^{1-\alpha}}{t}\diff x
		-\frac{\lambda}{\nu}
		\frac{\|\ph(t)(v^*+tv)\|_{\nu}^{\nu}-\|\ph(t)v^*\|_{\nu}^{\nu}}{t}\\
		&<\frac{1}{p} \frac{\|\ph(t)(v^*+tv)\|_{1,p}^p-\|\ph(t)v^*\|_{1,p}^p}{t} +\frac{1}{q} \frac{\|\nabla (\ph(t)(v^*+tv))\|_{q,\mu}^q-\|\nabla (\ph(t)v^*)\|_{q,\mu}^q}{t}\\
		&\quad +\frac{1}{q} \frac{\|\ph(t)(v^*+tv)\|_{q,\mu}^q-\| \ph(t)v^*\|_{q,\mu}^q}{t}
		-\frac{\ph(t)^{1-\alpha}}{(1-\alpha)t^{\alpha}} \int_C \xi(x)h^{1-\alpha} \diff x\\
		&\quad -\frac{\lambda}{\nu}
		\frac{\|\ph(t)(v^*+tv)\|_{\nu}^{\nu}-\|\ph(t)v^*\|_{\nu}^{\nu}}{t}.
	\end{align*}
	From the considerations above we see that
	\begin{align*}
		0
		&\leq \frac{\Psi_\lambda(\ph(t)(v^*+tv))-\Psi_\lambda(\ph(t)v^*)}{t} \to -\infty \quad \text{as }t\to 0^+,
	\end{align*}
	which is a contradiction. This shows that $v^*>0$ a.\,e.\,in $\R^N$.
	
	The rest of the proof  is similar to the one of Proposition \ref{prop_first_weak_solution}. Note that \eqref{def1} and \eqref{def2} can be proven similarly using again \eqref{prop_200} and the inequality $\omega_{v^*}(1) \geq \omega_{v^*}(\ph(t))$ along with $v^*>0$. From Proposition \ref{prop_minimize_negative} we know that $\Psi_\lambda(v^*)>0$. 
\end{proof}

Finally, the proof of Theorem \ref{main_result} is now a direct consequence of Propositions \ref{prop_first_weak_solution} and \ref{prop_seond_weak_solution}.

\section*{Acknowledgment}

W. Liu was supported by the NNSF of China (Grant No. 11961030).


\end{document}